\theoremstyle{plain}
\newtheorem{theorem}{Theorem}[section]
\newtheorem{lemma}[theorem]{Lemma}
\newtheorem{proposition}[theorem]{Proposition}
\newtheorem{corollary}[theorem]{Corollary}
\newtheorem{problem}[theorem]{Problem}
\theoremstyle{definition}
\newtheorem{definition}[theorem]{Definition}
\newtheorem{remark}[theorem]{Remark}
\newtheorem{convention}[theorem]{Conventions}
\title[A cork of the rational surface the second Betti number 9]{A cork of the rational surface with the second Betti number 9}
\author{Yohei Wakamaki}
\address{Department of Pure and Applied Mathematics, Graduate School of Information Science and Technology, Osaka University, 1-5 Yamadaoka, Suita, Osaka 565-0871, Japan}
\email{y-wakamaki@ist.osaka-u.ac.jp}
\keywords{4-manifold; cork; rational surface.}
\subjclass[2020]{57R55, 57R65}
\date{July 31, 2023. \textit{Revised}: September 26, 2024.}
\begin{document}

\begin{abstract}
  We provide the first explicit example of a cork of $\mathbf{CP}^2 \# 8\overline{\mathbf{CP}^2}$. This result gives the current smallest second Betti number of a standard simply-connected closed $4$-manifold for which an explicit cork has been found.
\end{abstract}
\maketitle

\section{Introduction}\label{intro}
Throughout this article, we assume that all manifolds are smooth and oriented, and all maps are smooth unless otherwise stated. 

One of the fascinating problems in $4$-dimensional topology asks whether a simply-connected closed $4$-manifold with a small second Betti number $b_2$ admits an exotic smooth structure. The interest in this problem stems from the fact that constructing exotic smooth structures on such $4$-manifolds is much more challenging than on those with large $b_2$. The first example of an exotic smooth structure on simply-connected closed $4$-manifolds was discovered by Donaldson \cite{1985Donalson, 1987Donaldson}. A few years later, Friedman and Morgan \cite{1988Friedman-Morgan} proved that for each integer $m \geq 10$, there exists a simply-connected closed $4$-manifold with $b_2=m$ such that it admits infinitely many exotic smooth structures. After their result, many experts \cite{1989Kotschick,2005Park,2005Stipsicz-Szabo,2005Park-Stipsicz-Szabo,2008Akhmedov-Park,2010Akhmedov-Park} have contributed to lowering the known minimal value of $b_2$ of a simply-connected closed $4$-manifold on which an exotic smooth structure exists. Currently, we know that for each integer $m\geq 3$, there exists a simply-connected closed $4$-manifold with $b_2= m$ that admits infinitely many exotic smooth structures.

Regarding the exotic smooth structures on simply-connected closed $4$-manifolds, the study of smooth structures of $4$-manifolds by using the concept of \textit{cork} (see Definition \ref{def:cork}) plays an important role. Due to the work of Curtis--Freedman--Hsiang--Stong \cite{1996Curtis-Freedman-Hsiang-Stong} and independently Matveyev \cite{1996Matveyev}, for any exotic pair $(X,Y)$ of simply-connected closed $4$-manifolds, there exist a cork $(C,\tau)$ and an embedding $i$ of $C$ into $X$ such that $Y$ is diffeomorphic to the \textit{cork twist of $X$ along $(C,\tau, i)$}, i.e., the $4$-manifold obtained by cutting out the embedded copy $i(C)$ in $X$ and regluing $C$ via the map $i\circ \tau$. In other words, one can obtain any exotic smooth structure of $X$ by a cork twist of $X$. When the cork twist of $X$ along $(C, \tau, i)$ is exotic to $X$, we say $i$ is an \textit{effective embedding of $(C,\tau)$ into $X$}. A cork $(C,\tau)$ is called a \textit{cork of} $X$ if there exists an effective embedding of $(C, \tau)$ into $X$.

Despite the importance of corks of simply-connected closed $4$-manifolds, we have relatively few explicit examples of them. In particular, the minimal value of $b_2$ of \textit{standard} simply-connected closed $4$-manifolds for which an explicit cork had been found to date was $10$ \cite{2012Akbulut}. A simply-connected closed $4$-manifolds is called standard if it is obtained as the connected sum of finitely many copies of $\mathbf{CP}^2, \overline{\mathbf{CP}^2}, S^2\times S^2, K3$ and $\overline{K3}$. As is well-known, if the celebrated $11/8$-conjecture \cite{1982Matsumoto} is true, then it follows that any simply-connected closed $4$-manifold is homeomorphic to one of the standard simply-connected closed $4$-manifolds.
  
  This situation naturally leads us to the following problem, which can be regarded as a cork version of the problem we raised at the beginning of this paper.

  \begin{problem}\label{problem}
   Find an effective embedding of a cork into a standard simply-connected closed $4$-manifold with $b_2\leq 9$.
  \end{problem} 
  
   We remark that an effective embedding of the cork into a non-standard simply-connected closed $4$-manifold with $b_2=9$ has already been found by Akbulut and Yasui \cite[Remark 6.2]{2008Akbulut-Yasui}, but it is unknown whether its cork twist results in a standard simply-connected closed $4$-manifold. By the definition of cork twist, if the cork twist along their cork results in a standard simply-connected closed $4$-manifold, it immediately follows that their cork has an effective embedding into a standard simply-connected closed $4$-manifold.

  In this article, we prove the following theorem by finding an effective embedding of a cork in Figure \ref{fig:W2} into a standard $4$-manifold. As a result, we provide an answer to Problem \ref{problem} in the case $b_2=9$.
  
  \begin{theorem}\label{thm:cork}
    There exists an effective embedding of the cork $(W_2,f_2)$ into $\mathbf{CP}^2 \# 8\overline{\mathbf{CP}^2}$, i.e., the cork $(W_2,f_2)$ is a cork of $\mathbf{CP}^2 \# 8\overline{\mathbf{CP}^2}$.
  \end{theorem}

  \begin{figure}[htbp]
    \includegraphics*[width=6cm]{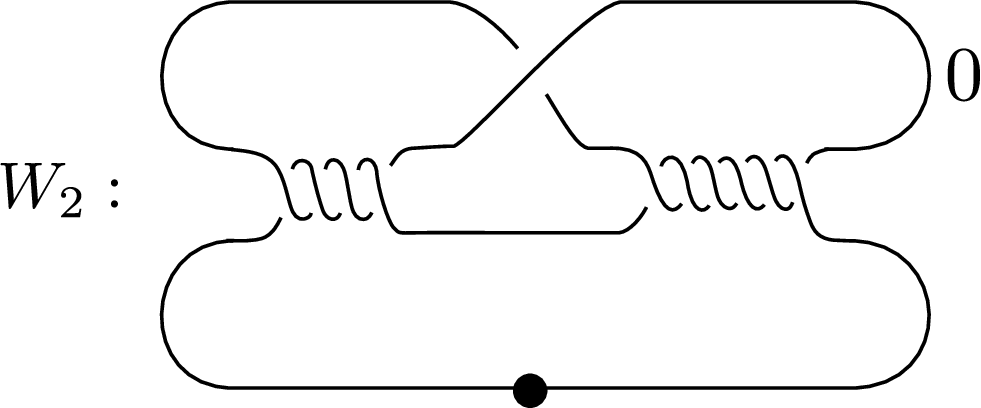}
    \caption{The cork $(W_2,f_2)$. The involution $f_2$ of $\partial W_2$ is defined by exchanging the zero and the dot in this diagram.}
    \label{fig:W2}
  \end{figure}

  The proof of this theorem is divided into two parts. The first is giving an explicit Kirby diagram (Figure \ref{fig:R8_1}) of an exotic $\mathbf{CP}^2 \# 8\overline{\mathbf{CP}^2}$ obtained by Yasui's construction \cite[Corollary 5.2]{2010Yasui} which uses the rational blowdown technique \cite{1997Fintushel-Stern}. We denote this manifold as $R_8$. Note that Yasui \cite{2010Yasui} explicitly described a procedure to give a Kirby diagram of a $4$-manifold obtained by his construction. However, no explicit Kirby diagram of an exotic $\mathbf{CP}^2 \# 8\overline{\mathbf{CP}^2}$ obtained by Yasui's construction was given before. We follow Yasui's procedure to give a diagram of $R_8$ with some modifications. The second is finding the diagram which contains an embedded copy of $W_2$ such that the cork twists along $(W_2,f_2)$ results in $\mathbf{CP}^2 \# 8\overline{\mathbf{CP}^2}$. Such a diagram is described in Figure \ref{fig:R8_2}. 

  The following follows from Theorem \ref{thm:cork}.

  \begin{corollary}\label{cor:stab}
    The $4$-manifold $R_8$ becomes diffeomorphic to $2\mathbf{CP}^2 \# 9\overline{\mathbf{CP}^2}$ after taking a connected sum with $S^2 \times S^2$.
  \end{corollary}

  This corollary is related to the famous open problem asking whether every exotic pair of simply-connected closed $4$-manifolds becomes diffeomorphic after one stabilization, i.e., taking a connected sum with $S^2 \times S^2$. It is well-known that, due to the theorem of Wall \cite{1964Wall}, every exotic pair of simply-connected closed $4$-manifolds becomes diffeomorphic after sufficiently many stabilizations, and it has been proved that only one stabilization is enough in many cases. To the best of the author's knowledge, among the exotic pairs of simply-connected closed $4$-manifolds whose Kirby diagrams are explicitly given, the pair $(\mathbf{CP}^2 \allowbreak\# \allowbreak8\overline{\mathbf{CP}^2}, R_8)$ is currently the smallest example in terms of $b_2$ that becomes diffeomorphic after one stabilization. It is not clear to the author whether other examples of exotic pairs of simply-connected closed $4$-manifolds with $b_2\leq 8$ known to date become diffeomorphic after one stabilization. We note that many variants of the problem about one stabilization have recently been answered negatively. For details, see \cite{2020Lin,2021Lin-Mukherjee,2022Guth,2022Konno-Mukherjee-Taniguchi,2022Kang,2023Hayden,2023Hayden-Kang-Mukherjee,2023Konno-Mallick-Taniguchi1,2023Konno-Mallick-Taniguchi2,2023Auckly}.

  \begin{remark}\label{rmk:stab}
    We can also prove an exotic $\mathbf{CP}^2 \# k\overline{\mathbf{CP}^2} (k=5,6,7,9)$ obtained by Yasui's construction becomes diffeomorphic to $2\mathbf{CP}^2 \# (k+1)\overline{\mathbf{CP}^2}$ after one stabilization. Furthermore, after posting the first version of this article on arXiv, Rafael Torres informed the author that a stronger version of Corollary \ref{cor:stab} holds. Namely, the manifold $R_8$ becomes diffeomorphic to $2\mathbf{CP}^2 \# 8\overline{\mathbf{CP}^2}$ after taking a connected sum with $\mathbf{CP}^2$. It is possible to apply his idea to an exotic $\mathbf{CP}^2 \# k\overline{\mathbf{CP}^2} (k=6,7,9)$ obtained by Yasui's construction.  These results will be discussed in the forthcoming paper \cite{Wakamaki}.
  \end{remark}

  \subsection{Acknowledgement.}
    The author wishes to express his deepest gratitude to his advisor, Kouichi Yasui, for his patience, encouragement, and numerous valuable suggestions, including the topic of this paper. He is grateful to Rafael Torres for generously informing the author of a stronger version of Corollary \ref{cor:stab} mentioned in Remark \ref{rmk:stab}. He is thankful to the anonymous referees for their careful reading, many suggestions, and pointing out mistakes in the original manuscript. He also thanks Natsuya Takahashi for many valuable conversations and comments on the draft of this paper, and Yuichi Yamada for his interest in this study.
  
\section{Rational blowdown and Yasui's small exotic rational surfaces}\label{section:construction}
  We start this section by reviewing the definition of the rational blowdown of $4$-manifold, which was introduced by Fintushel and Stern \cite{1997Fintushel-Stern}. Then, we recall Yasui's construction \cite{2010Yasui} of an exotic $\mathbf{CP}^2 \# 8 \overline{\mathbf{CP}^2}$ that uses the rational blowdown. To find a cork embedded in an exotic $\mathbf{CP}^2 \# 8 \overline{\mathbf{CP}^2}$ in Section \ref{section:finding_a_cork}, we follow Yasui's construction \cite{2010Yasui} with small modifications (see Remark \ref{plumbing}) and give an explicit diagram of the $4$-manifold.
  
  \begin{definition}
    For each integer $p\ge 2$, let $C_p$ and $B_p$ be the compact $4$-manifolds with boundary defined by the Kirby diagrams in Figure \ref{fig:Cp&Bp}. Here $u_i$ in Figure \ref{fig:Cp&Bp} represents the elements of $H_2(C_p;\mathbf{Z})$ given by the corresponding $2$-handles. (i.e., $u_{p-1}^2 = -p-2, u_i^2= -2, \textrm{and}~ u_i \cdot u_{i+1} = +1 ~(1\leq i \leq p-2)$.) 
    
    \begin{figure}[htbp]
      \begin{center}
        \includegraphics[width=11cm]{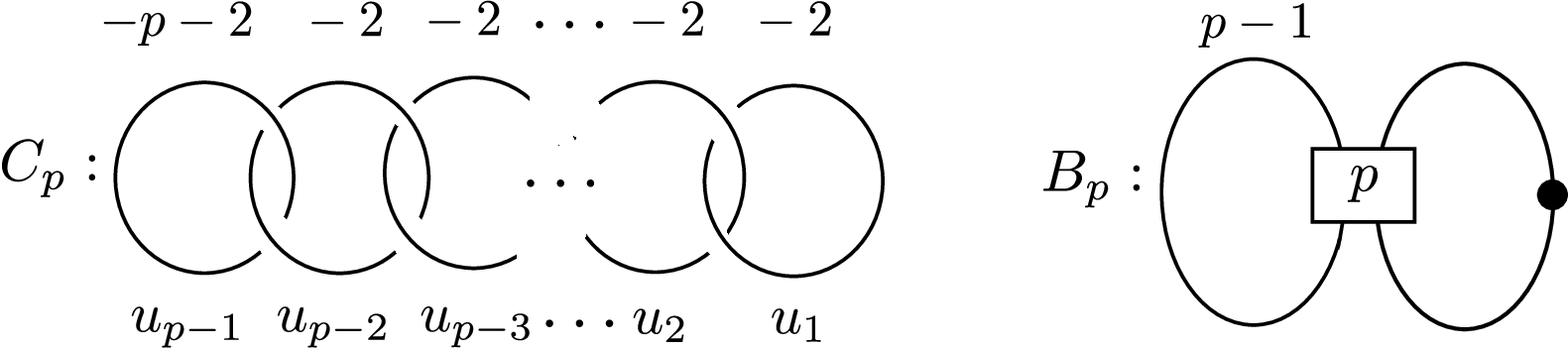}
        \caption{$C_p$ and $B_p$.}
        \label{fig:Cp&Bp}
      \end{center}
    \end{figure}
    
  \end{definition}
  
  \begin{definition}[\cite{1997Fintushel-Stern},\cite{1999Gompf-Stipsicz}]\label{def:RBD}
    Let $X$ be a compact $4$-manifold and $C$ be an embedded copy of $C_p$ in $X$. The $4$-manifold $X_{(p)}=X-(\textrm{int} C) \cup_\partial B_p$ is called the \textit{rational blowdown} of $X$ along $C$. This operation is well-defined since any self-diffeomorphism of $\partial B_p$ extends over $B_p$ (For details, see \cite[Section 8.5]{1999Gompf-Stipsicz}).
  \end{definition}
  
  Yasui's construction \cite{2010Yasui} begins with the following proposition. Although he constructed exotic $\mathbf{CP}^2 \# k\overline{\mathbf{CP}^2}$ for $5\leq k \leq 9$, we only focus on the case $k=8$. 
  
  \begin{proposition}[{\cite[Proposition 3.1 (1)]{2010Yasui}}]
    For $a \geq 1$, the complex projective plane $\mathbf{CP}^2$ admits the handle decomposition in Figure \ref{fig:CP2}.
    \begin{figure}[htbp]
      \begin{center}
        \includegraphics[width=10cm]{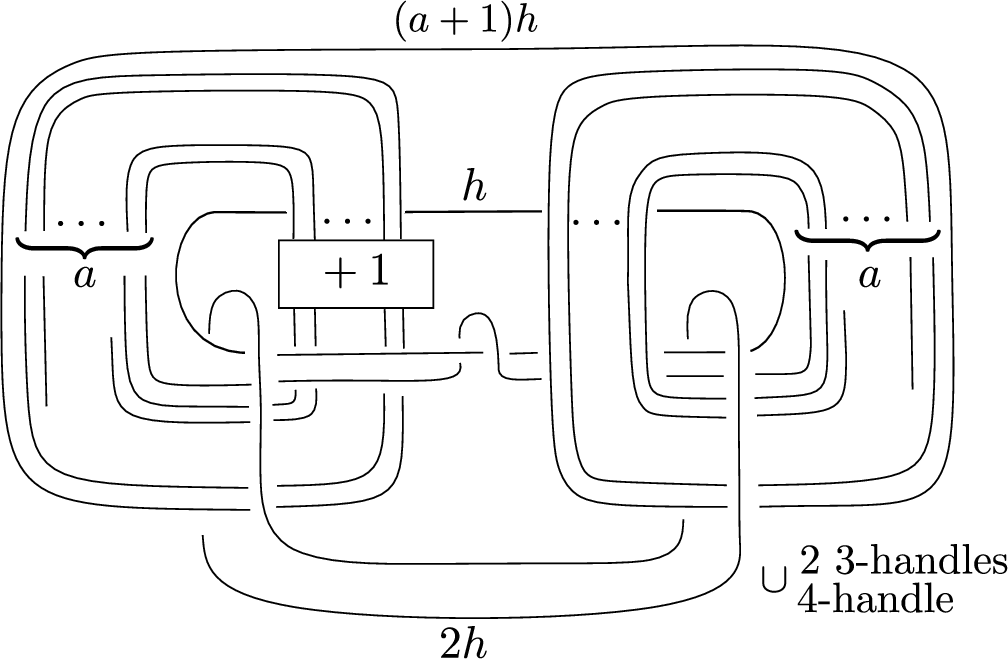}
        \caption{$\mathbf{CP}^2\quad (a\ge 1)$.}
        \label{fig:CP2}
      \end{center}
    \end{figure}
  \end{proposition}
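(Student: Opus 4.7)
The approach is to verify that the Kirby diagram in Figure \ref{fig:CP2} represents $\mathbf{CP}^2$ by producing an explicit sequence of Kirby moves—blow-ups, blow-downs, handle slides, and isotopies—connecting it to the standard Kirby diagram of $\mathbf{CP}^2$, namely a single $+1$-framed unknot together with the $0$- and $4$-handles. Since all such moves preserve the diffeomorphism type of the underlying $4$-manifold, this establishes the desired handle decomposition.

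I would proceed by building the diagram forward from the standard one. Starting with a single $+1$-framed unknot, one successively blows up $\pm 1$-framed unknots and slides handles over one another to create the chain of $(-2)$-framed $2$-handles together with the handle carrying the $+1$-eigenvector of the intersection form that appears in Figure \ref{fig:CP2}. This direct synthesis has the advantage of making manifest why the framings, linking numbers, and labels in the figure take the values they do, as each arises from a specific move.

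To handle the parameter $a \geq 1$, induction is the natural structure. The base case $a=1$ gives a relatively small diagram whose equivalence with $\mathbf{CP}^2$ can be checked by a short, direct reduction via handle cancellations. For the inductive step, I would exhibit a single Kirby move—most naturally a blow-up of a $-1$-framed unknot linked once with one of the existing $(-2)$-framed components, followed by one or two handle slides—that transforms the diagram for parameter $a-1$ into the diagram for parameter $a$. In this way the entire family reduces to the base case.

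The main obstacle is the careful bookkeeping of framings and linking numbers under handle slides: sliding a $2$-handle $h_i$ over $h_j$ changes the framing of $h_i$ by the framing of $h_j$ plus or minus twice the linking number of their attaching circles, and analogous changes propagate to every linking number involving $h_i$. Ensuring that all framings and linking numbers at the end of the procedure agree with those displayed in Figure \ref{fig:CP2}, and that the homology class represented by each $2$-handle is tracked correctly so that the configuration matches a copy of $C_p$ inside the eventual rational surface, requires a meticulous but essentially routine calculation rather than any new idea.
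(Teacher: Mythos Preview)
There is a genuine problem with your approach. You list blow-ups and blow-downs among the Kirby moves you intend to use and then assert that ``all such moves preserve the diffeomorphism type of the underlying $4$-manifold.'' This is false: blowing up introduces a $\overline{\mathbf{CP}^2}$ summand and blowing down removes one. Your plan to ``successively blow up $\pm 1$-framed unknots \ldots\ to create the chain of $(-2)$-framed $2$-handles'' would therefore produce $\mathbf{CP}^2\# k\overline{\mathbf{CP}^2}$ for some $k\ge 1$, not $\mathbf{CP}^2$ itself. In fact the $(-2)$-chains you are anticipating do not appear in Figure~\ref{fig:CP2} at all; they arise only in the later figures \emph{after} genuine blow-ups have been performed (see the proof of Proposition~\ref{prop:CP2-14CP2}). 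The diagram in Figure~\ref{fig:CP2} must instead consist of the standard $+1$-framed $2$-handle together with additional handles coming in \emph{canceling pairs} (either $1$-/$2$-handle or $2$-/$3$-handle pairs), so that the total manifold is still $\mathbf{CP}^2$.

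Accordingly, the correct strategy---and the one in Yasui's paper, from which the present paper simply quotes this proposition without reproving it---is to start from the standard diagram of $\mathbf{CP}^2$, introduce canceling handle pairs, and then perform handle slides and isotopies to arrive at Figure~\ref{fig:CP2}; the parameter $a$ controls how many such pairs are introduced and how the slides are iterated. Your inductive scheme on $a$ is a reasonable scaffold, but the inductive step must be the birth of a canceling pair followed by slides, not a blow-up. Finally, your remark about tracking homology classes so that ``the configuration matches a copy of $C_p$'' does not belong to this proposition: the embedding of $C_p$ is only established later, in Corollary~\ref{cor:C_7}, after fourteen actual blow-ups have been performed.
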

  
  \begin{convention}\label{convention}
    (1) In the figures below, we often draw only the local pictures of Kirby diagrams. We assume that the parts not drawn in the diagrams are naturally inherited from the previous diagrams and always fixed.\\
    (2) In order to indicate the bands for the handle slides, we sometimes draw arrows in the diagrams as on the left in Figure \ref{fig:handle_slide}. Figure \ref{fig:handle_slide} only shows the cases when the attaching circles of $2$-handles are unknots and unlinked. As in Figure \ref{fig:handle_slide}, the shape of the arrow determines the band for the handle slide. The arrow with a positive twist on the left in Figure \ref{fig:handle_slide} (b) will only appear in Figure \ref{fig:R8_corktwist_local_1} and Figure \ref{fig:R8_corktwist_local_3}. As usual, the boxes with integer $m$ in figures stand for $m$ right-handed full twists if $m$ is positive and $|m|$ left-handed full twists if $m$ is negative.
    \begin{figure}[htbp]
      \begin{center}
        \includegraphics[width=10cm]{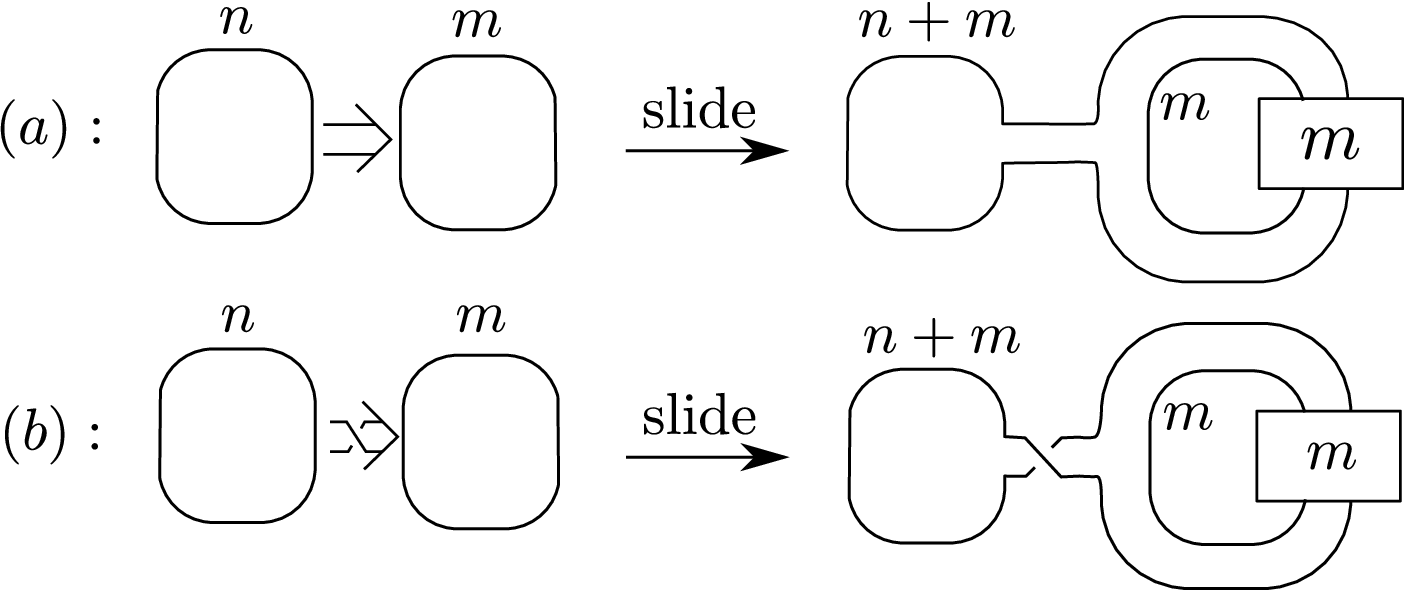}
        \caption{The arrows (on the left) determine the bands (on the right) for handle slides.}
        \label{fig:handle_slide}
      \end{center}
    \end{figure}
    \\
    (3) We often represent the framings of the $2$-handles by the second homology classes corresponding to the $2$-handles. It is because we need the information of homology classes of certain $2$-handles to prove Theorem \ref{thm:R8}. When the framings of the $2$-handles are represented by the second homology classes, we mention the $2$-handles or the attaching circles of $2$-handles by their homology classes. Note that one can obtain the usual framing coefficients of the $2$-handles by squaring their homology classes.\\
    (4) We denote the natural orthogonal basis of $H_2(\mathbf{CP}^2 \# k \overline{\mathbf{CP}^2} ; \mathbf{Z}) = \allowbreak H_2(\mathbf{CP}^2 ; \allowbreak \mathbf{Z}) \allowbreak \oplus_k \allowbreak H_2(\overline{\mathbf{CP}^2} \allowbreak ;\allowbreak \mathbf{Z})$ by $h, e_1, e_2, \dots, e_k$~(i.e., $h^2 = 1, e_i^2 = -1, h \cdot e_i = 0, ~\textrm{and}~ e_i \cdot e_j = 0 ( 1\leq i \neq j \leq k)$).
  \end{convention}

  \begin{proposition}[cf. {\cite[Proposition 3.2 (1), $a=4$]{2010Yasui}}]\label{prop:CP2-14CP2}
    $\mathbf{CP}^2 \# 14\overline{\mathbf{CP}^2}$ admits the handle decomposition in Figure \ref{fig:CP2-14CP2_1}.

    \begin{figure}[htbp]
      \begin{center}
        \includegraphics[width=14cm]{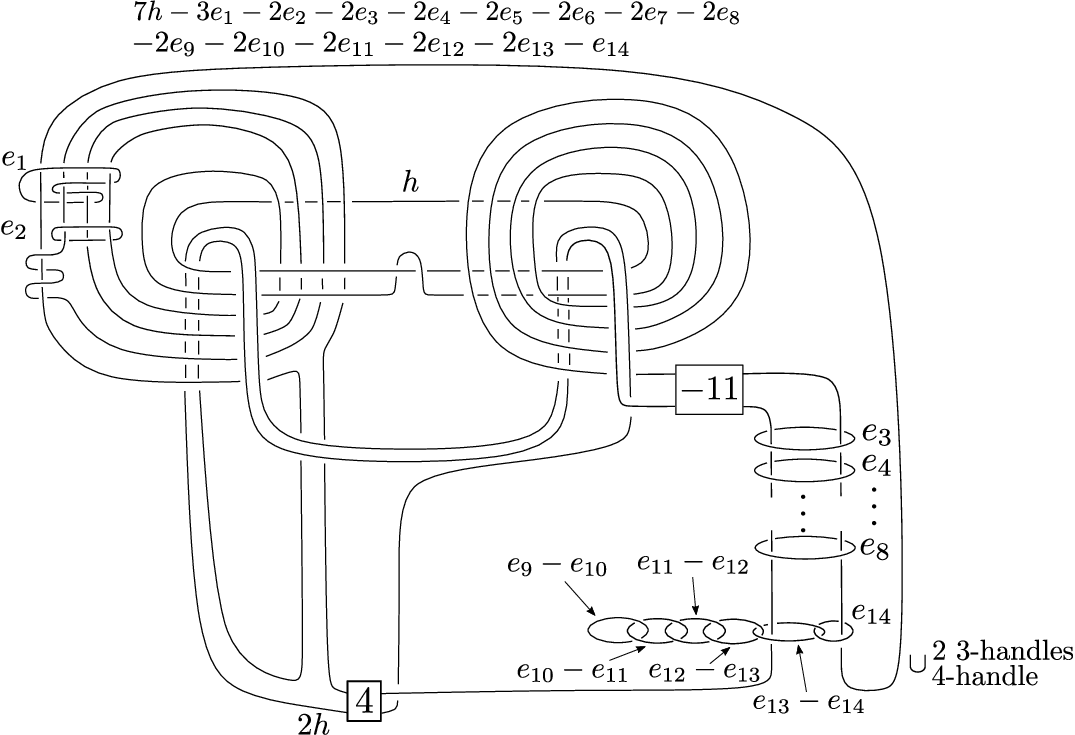}
        \caption{A diagram of $\mathbf{CP}^2 \# 14\overline{\mathbf{CP}^2}$.}
        \label{fig:CP2-14CP2_1}
      \end{center}
    \end{figure}
  \end{proposition}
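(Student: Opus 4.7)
The plan is to obtain the diagram in Figure \ref{fig:CP2-14CP2_1} by starting from the handle decomposition of $\mathbf{CP}^2$ supplied by the previous proposition with $a=4$, and then performing $14$ blow-ups followed by handle slides, keeping careful track of the second homology classes represented by each $2$-handle.

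First I would draw the Kirby diagram of $\mathbf{CP}^2$ from the previous proposition specialised to $a=4$, labeling the single $2$-handle by the generator $h$ of $H_2(\mathbf{CP}^2;\mathbf{Z})$. Next I would blow up $14$ times at strategic points along the attaching circle of this $2$-handle and at the crossings produced by the $a=4$ full twist; each blow-up introduces a new $(-1)$-framed unknot linking the strands once, and turns $h$ into $h-e_{i_1}-\cdots-e_{i_r}$ for the appropriate indices $i_j$ of the exceptional divisors encountered, while the new unknots become $2$-handles of homology class $e_{i}$. Choosing the locations of the blow-ups to match the linking pattern in Figure \ref{fig:CP2-14CP2_1} is the real design choice here; I would follow Yasui's prescription for $a=4$ in \cite{2010Yasui}, adapted to the conventions of this paper (in particular reading each exceptional sphere directly as its homology class, as in Convention \ref{convention}(3)).

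After the blow-ups I would perform a sequence of handle slides of the $e_i$-handles over one another (using the arrows of Convention \ref{convention}(2)) to split the diagram into the two chains of $-2$-framed $2$-handles that appear in Figure \ref{fig:CP2-14CP2_1}, each chain ending with the handle that has the remaining $(-p-2)$-type framing (or its analogue in the present plumbing), so that the configuration exhibits the $C_p$-type subdiagram needed later for the rational blowdown. At every slide I would update the homology classes by the rule $e_i \rightsquigarrow e_i \pm e_j$, and check that the resulting framings agree with the squares of the updated classes, as required by Convention \ref{convention}(3). Because the starting picture contains $1$ two-handle and the $14$ blow-ups introduce $14$ more, the total count is $15$ two-handles, matching the expected Betti number $b_2(\mathbf{CP}^2\#14\overline{\mathbf{CP}^2})=15$.

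The main obstacle is purely organisational: there is no deep argument, only the need to verify at each stage that (i) the chosen blow-up location produces the correct linking in the target diagram, (ii) the indicated handle slides are actually available (the required bands exist and are disjoint from other components), and (iii) the homology-class labels transform consistently so that the final labels in Figure \ref{fig:CP2-14CP2_1} are reproduced exactly. I would do this stepwise, illustrating the intermediate diagrams so that the reader can match each slide to the picture, and close by observing that the underlying smooth $4$-manifold is unchanged throughout, hence is $\mathbf{CP}^2\#14\overline{\mathbf{CP}^2}$.
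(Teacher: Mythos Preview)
Your overall strategy---start from the $a=4$ decomposition of $\mathbf{CP}^2$, blow up fourteen times, and finish with handle slides---is indeed what the paper does. But you have misread the input: the handle decomposition of $\mathbf{CP}^2$ in the previous proposition is \emph{not} the standard one with a single $2$-handle of class $h$. It is a non-standard decomposition containing several $2$-handles (together with cancelling pairs), among them separate handles of class $h$ and $2h$, plus a ``large'' handle that eventually becomes $u_6$; this is visible in Remark~\ref{plumbing}, which lists $h$ and $2h$ as distinct $2$-handles surviving in the final picture, and in your own count, which would otherwise not match the number of components in Figure~\ref{fig:CP2-14CP2_1}.

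This matters for more than bookkeeping. The paper first performs two blow-ups near the full twist to reach $\mathbf{CP}^2\#2\overline{\mathbf{CP}^2}$ with a handle of class $5h-3e_1-2e_2$, then slides that handle over the $2h$-handle to obtain the coefficient $7h$ that appears in $u_6$ of Corollary~\ref{cor:C_7}; only afterwards come eleven more blow-ups, the chain of slides $e_i\mapsto e_i-e_{i+1}$, and a final fourteenth blow-up. Your proposed scheme of blowing up and then ``sliding the $e_i$-handles over one another'' can never change the $h$-component of any class, so it cannot produce $7h-3e_1-2e_2-\cdots$. The slide over $2h$ is the missing step, and it is simply unavailable under the assumption that the starting diagram has only one $2$-handle.
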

  
  \begin{proof}
    The top left picture of Figure \ref{fig:deformation_to_obtain_CP2-2CP2} shows the neighborhood of the full twist in Figure \ref{fig:CP2} for the case $a=4$. By the procedure described in Figure \ref{fig:deformation_to_obtain_CP2-2CP2}, we obtain a Kirby diagram of $\mathbf{CP}^2 \# 2\overline{\mathbf{CP}^2}$ shown in Figure \ref{fig:CP2-2CP2_1}. We isotope this diagram to obtain Figure \ref{fig:CP2-2CP2_2}. Then we can move one of the kinks on the left side of this diagram to obtain Figure \ref{fig:CP2-2CP2_3}. By another isotopy, we obtain Figure \ref{fig:CP2-2CP2_4} and slide $5h-3e_1-2e_2$ over $2h$ to obtain Figure \ref{fig:CP2-2CP2_5}. Now we isotope this diagram and perform $11$ blowups to obtain in Figure \ref{fig:CP2-13CP2_1}. We slide $e_9$ over $e_{10}$, $e_{10}$ over $e_{11}$, $e_{11}$ over $e_{12}$, and $e_{12}$ over $e_{13}$. Then we obtain Figure \ref{fig:CP2-13CP2_2}. If we perform a blowup here, we obtain Figure \ref{fig:CP2-14CP2_1}, and this diagram represents a handle decomposition of $\mathbf{CP}^2 \# 14\overline{\mathbf{CP}^2}$.
  \end{proof}
    
    \begin{figure}[htbp]
      \begin{center}
        \includegraphics[width=10cm]{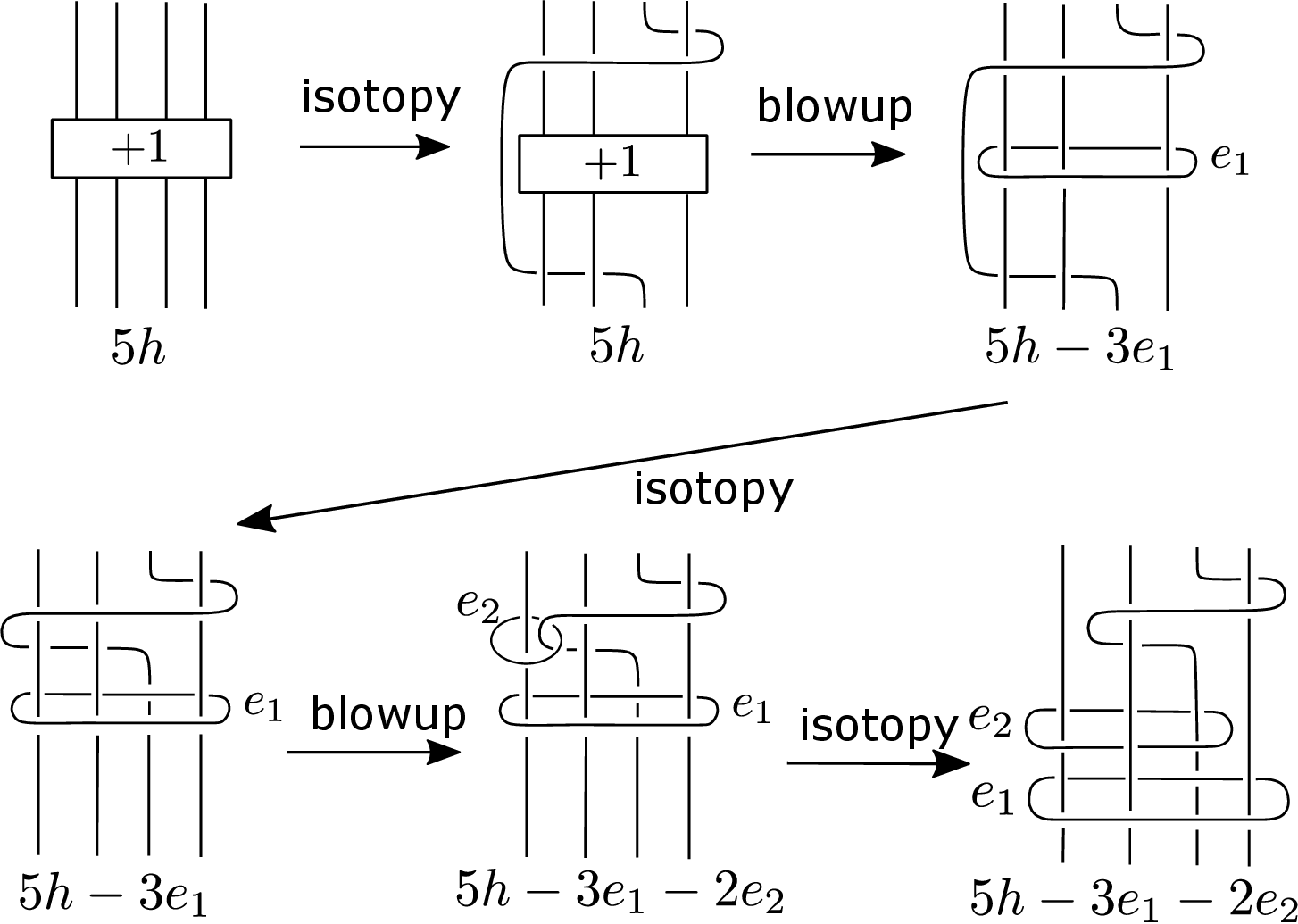}
        \caption{Isotopies and blowups applied to the diagram in a neighborhood of the full twist in Figure \ref{fig:CP2}.}
        \label{fig:deformation_to_obtain_CP2-2CP2}
      \end{center}
    \end{figure}
    
    \begin{figure}[htbp]
      \begin{center}
        \includegraphics[width=12cm]{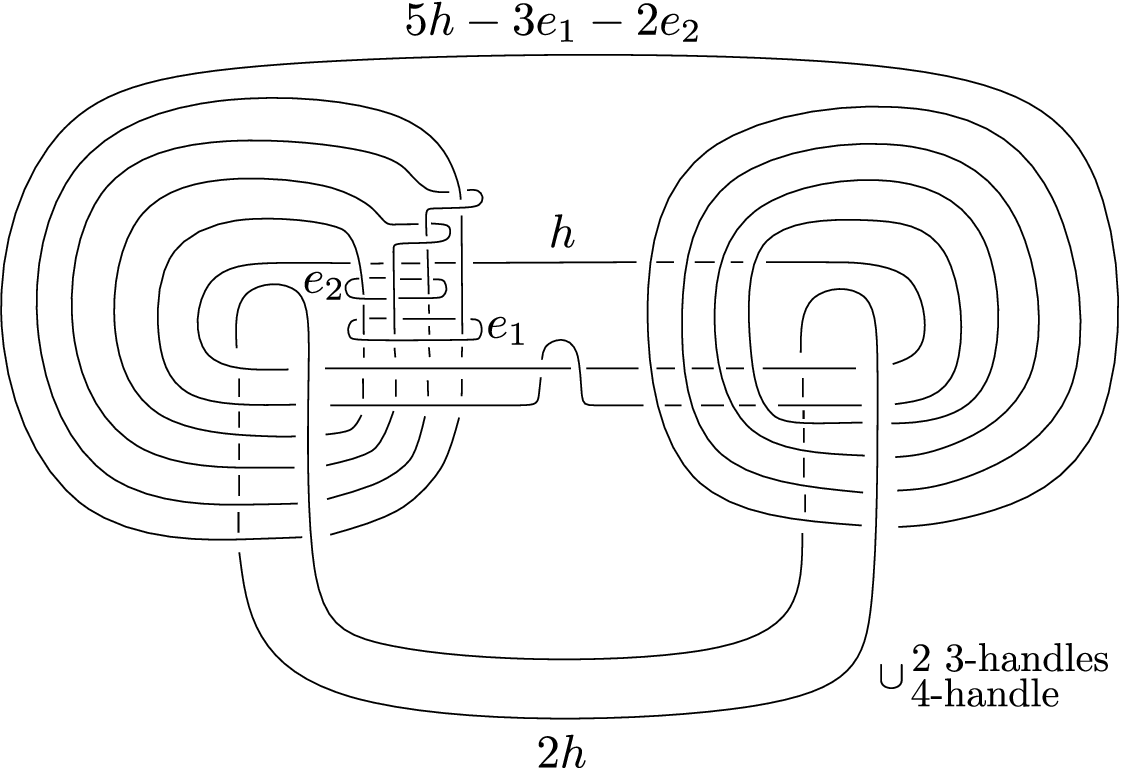}
        \caption{$\mathbf{CP}^2\#2\overline{\mathbf{CP}^2}$.}
        \label{fig:CP2-2CP2_1}
      \end{center}
    \end{figure}
    
    \begin{figure}[htbp]
      \begin{center}
        \includegraphics[width=12cm]{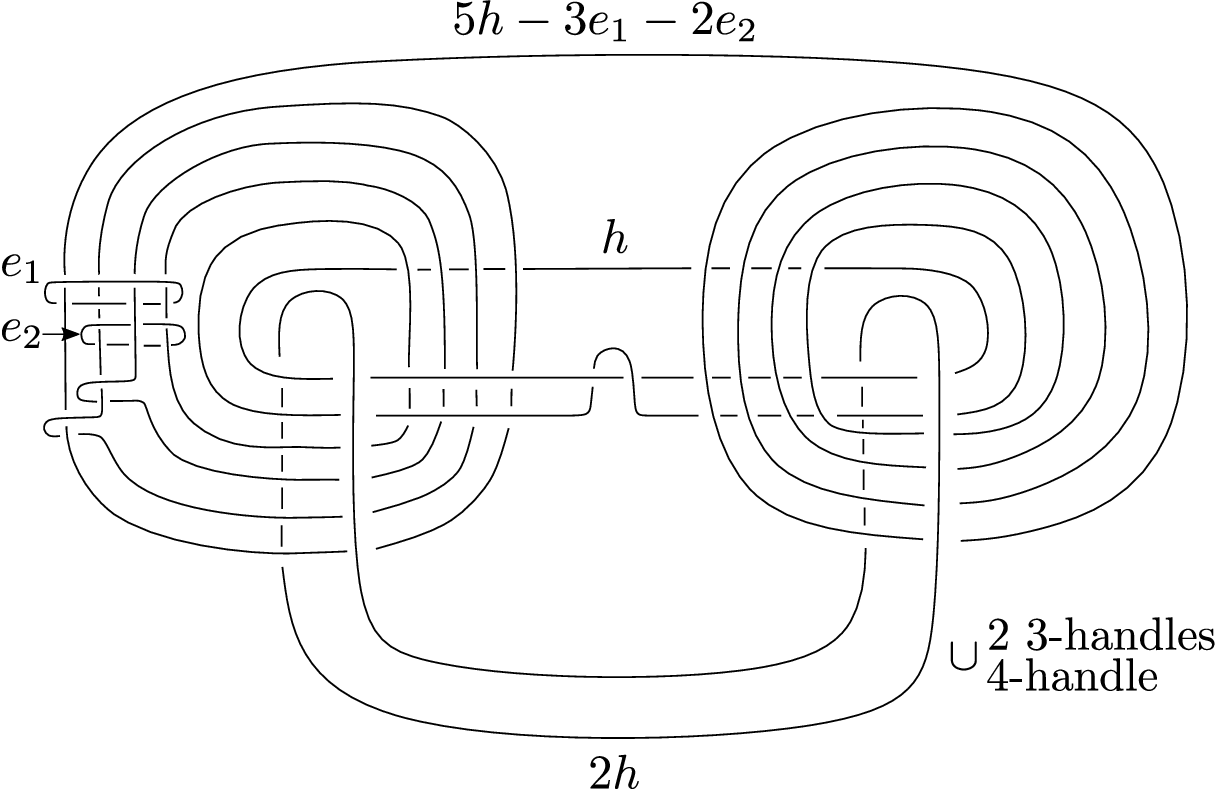}
        \caption{$\mathbf{CP}^2\#2\overline{\mathbf{CP}^2}$.}
        \label{fig:CP2-2CP2_2}
      \end{center}
    \end{figure}
    
    \begin{figure}[htbp]
      \begin{center}
        \includegraphics[width=12cm]{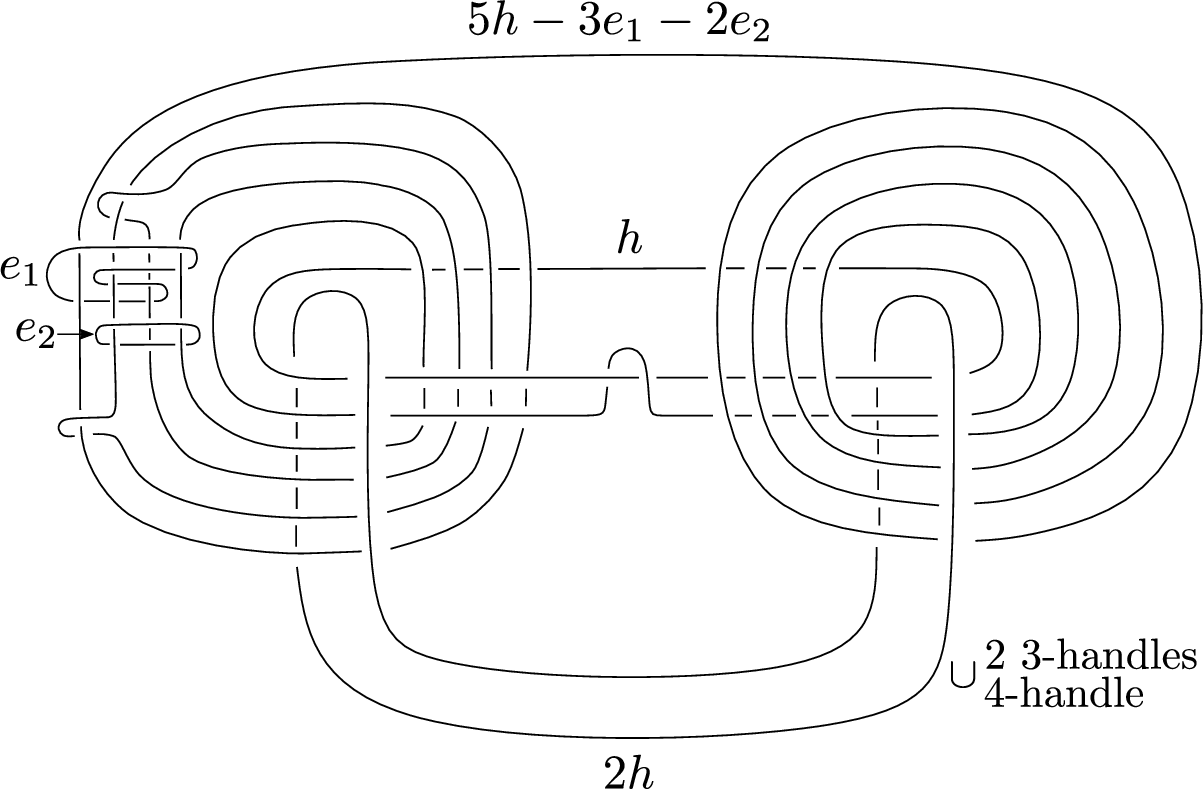}
        \caption{$\mathbf{CP}^2\#2\overline{\mathbf{CP}^2}$.}
        \label{fig:CP2-2CP2_3}
      \end{center}
    \end{figure}
    
    \begin{figure}[htbp]
      \begin{center}
        \includegraphics[width=12cm]{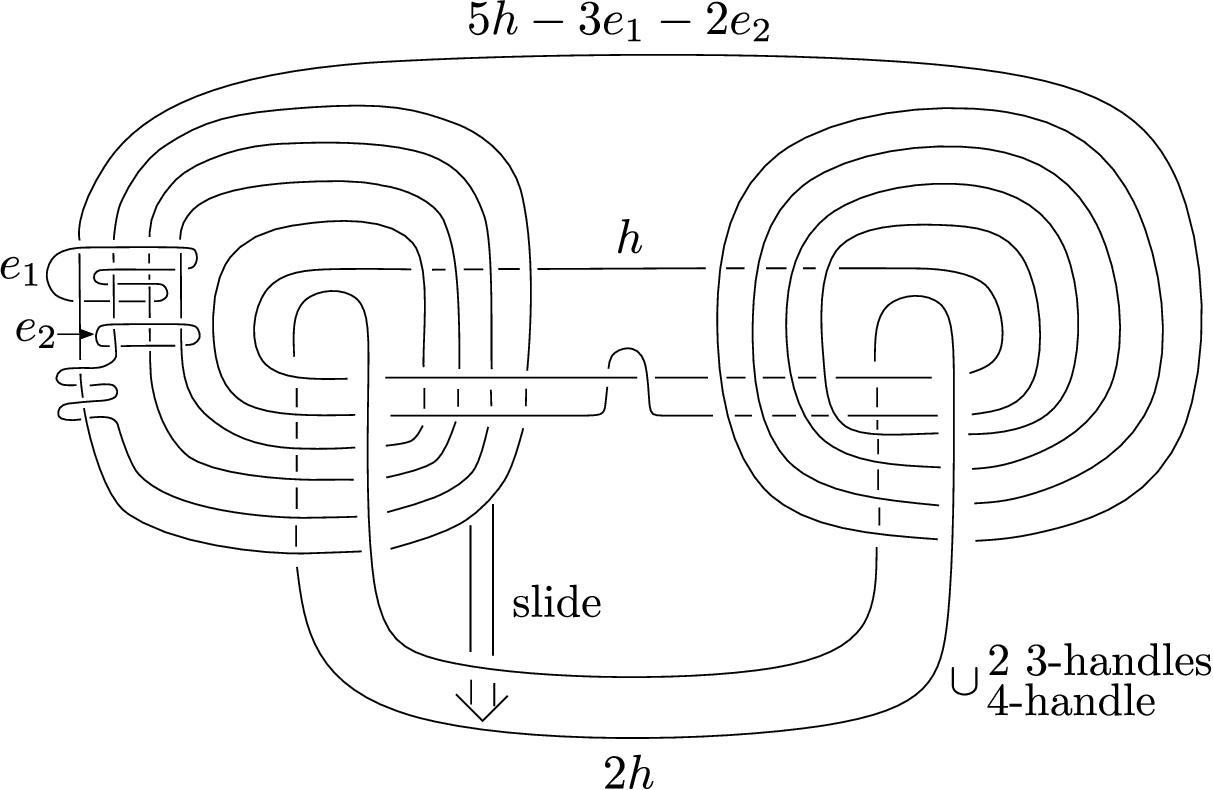}
        \caption{$\mathbf{CP}^2\#2\overline{\mathbf{CP}^2}$.}
        \label{fig:CP2-2CP2_4}
      \end{center}
    \end{figure}
    
    \begin{figure}[htbp]
      \begin{center}
        \includegraphics[width=12cm]{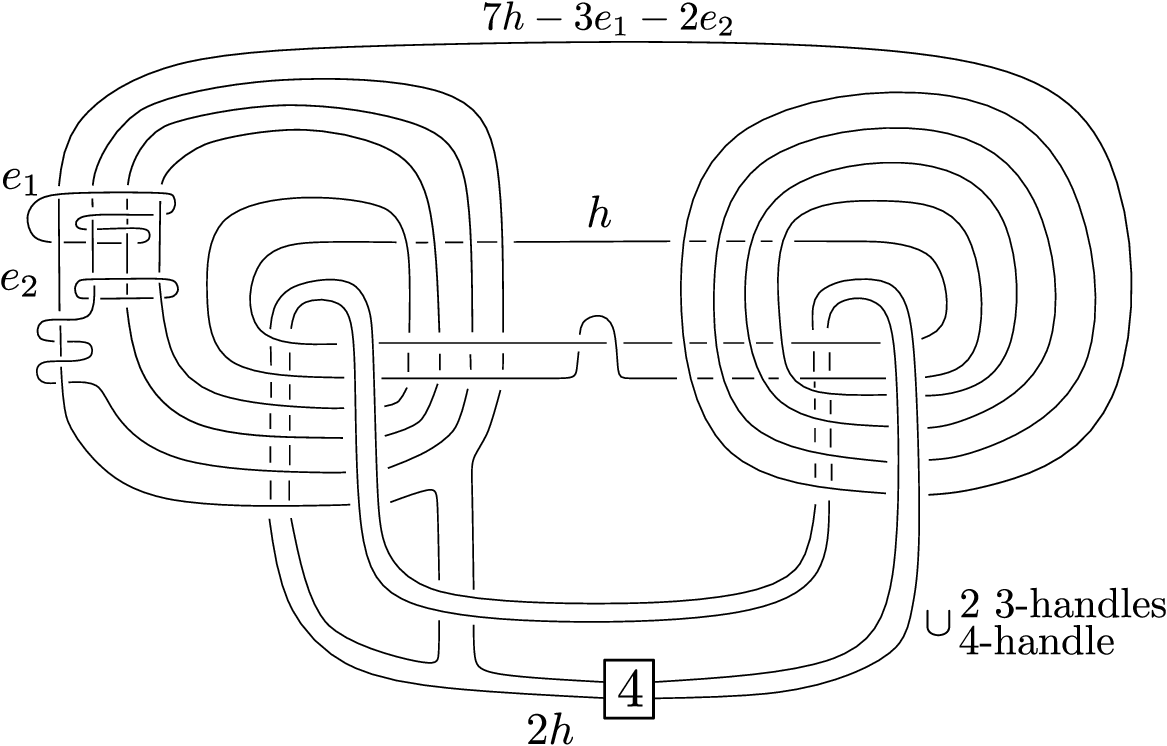}
        \caption{Another diagram of $\mathbf{CP}^2 \# 2\overline{\mathbf{CP}^2}$.}
        \label{fig:CP2-2CP2_5}
      \end{center}
    \end{figure}
    
    \begin{figure}[htbp]
      \begin{center}
        \includegraphics[width=14cm]{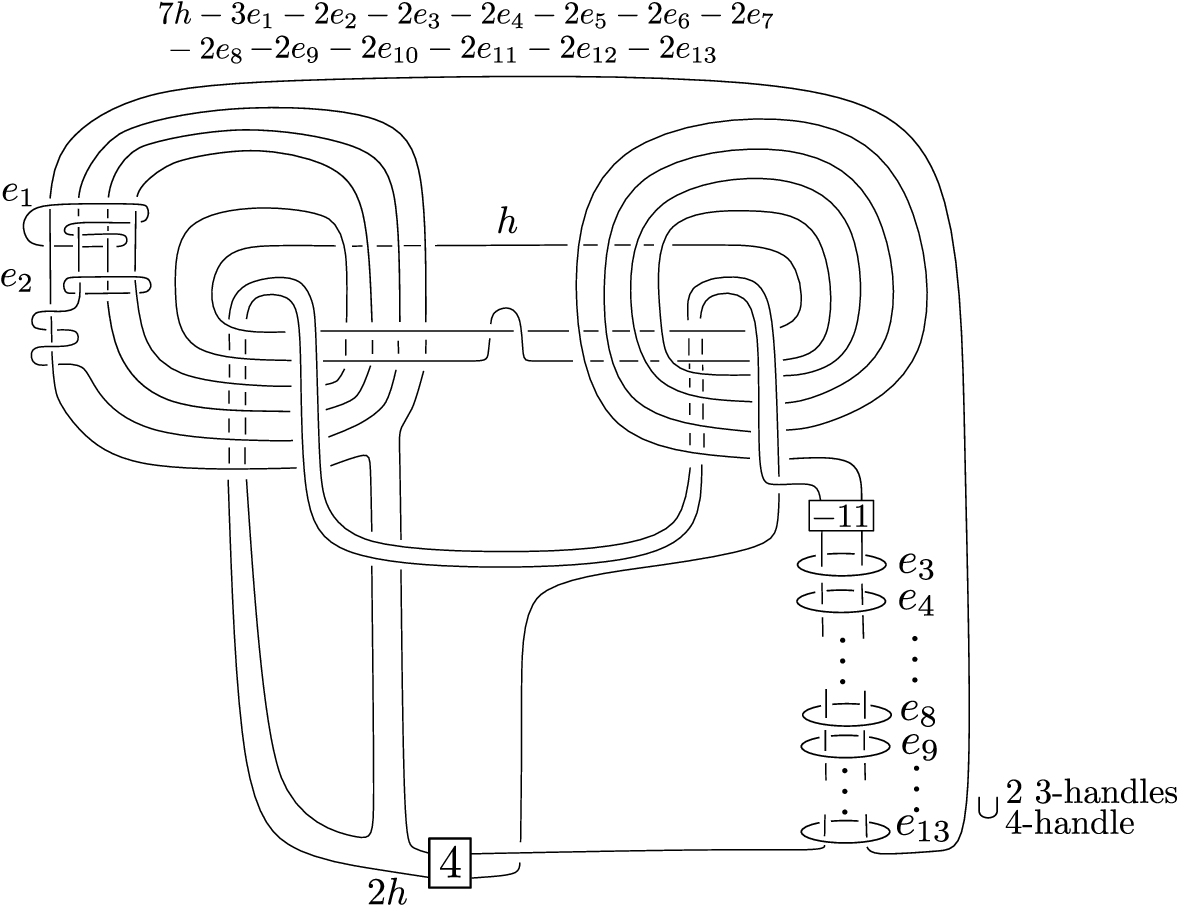}
        \caption{A diagram of $\mathbf{CP}^2 \# 13\overline{\mathbf{CP}^2}$.}
        \label{fig:CP2-13CP2_1}
      \end{center}
    \end{figure}
    
    \begin{figure}[htbp]
      \begin{center}
        \includegraphics[width=14cm]{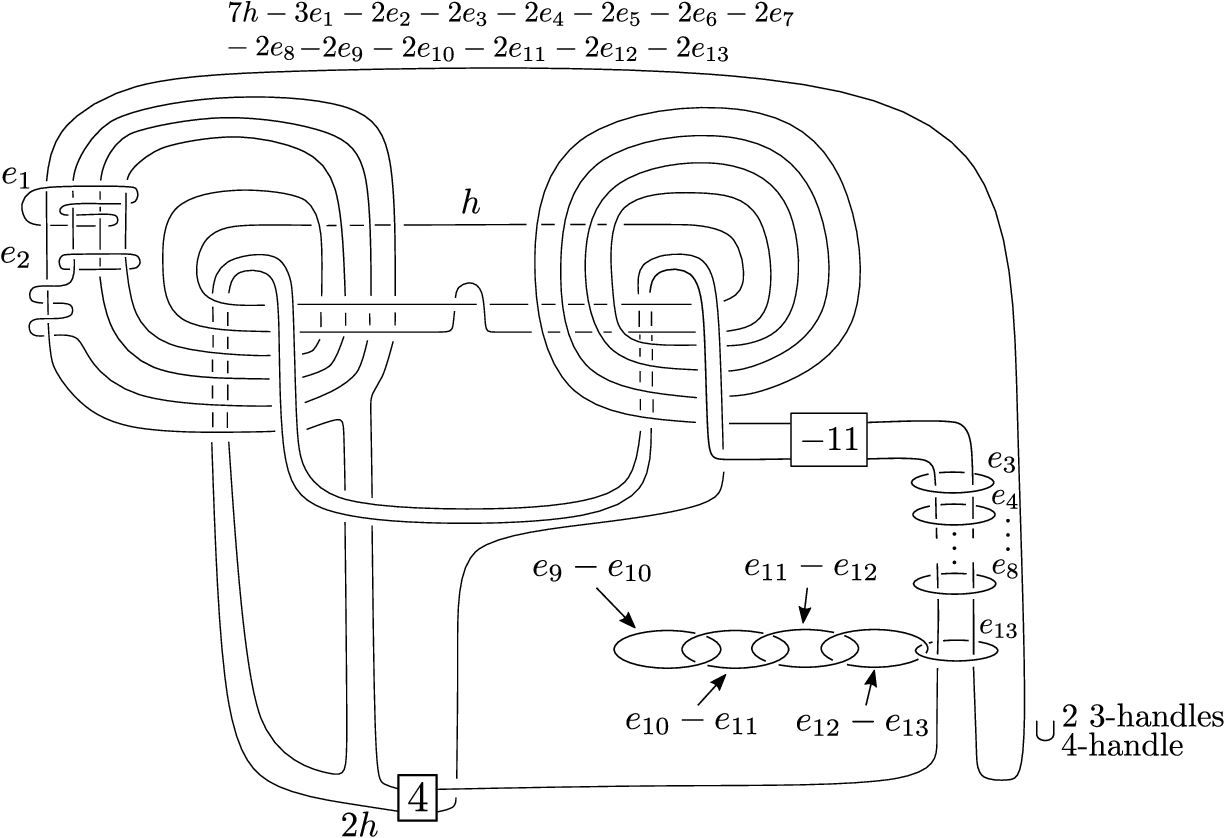}
        \caption{Another diagram of $\mathbf{CP}^2 \# 13\overline{\mathbf{CP}^2}$.}
        \label{fig:CP2-13CP2_2}
      \end{center}
    \end{figure}
  
  \begin{remark}\label{plumbing}
    The reader may notice that there are some differences between the Kirby diagrams in Figure \ref{fig:CP2-14CP2_1} and \cite[Proposition 3.2 (1), $a=4$]{2010Yasui}. First, we kept the attaching circles of all the $2$-handles, whereas in \cite{2010Yasui} the $2$-handles $h, 2h, e_1, e_2, \dots, e_8,$ and $e_{14}$ are omitted for simplicity. Second, we see that Figure \ref{fig:CP2-14CP2_1} we reached is different from the picture as in \cite[Figure 6, $a=4$]{2010Yasui}, even if we remove some attached $2$-handles from Figure \ref{fig:CP2-14CP2_1}. Indeed, none of components $e_8-e_9, e_7-e_8, \cdots , e_3-e_4$ appeared in \cite[Proposition 3.2 (1), $a=4$]{2010Yasui} are contained. This is because we will use two components of $e_3, e_4, \cdots , e_8$ in the proof of Lemma \ref{lem:finding_W2}.
  \end{remark}

  The next corollary follows from the Kirby diagram of $\mathbf{CP}^2 \# 14\overline{\mathbf{CP}^2}$ in Figure \ref{fig:CP2-14CP2_1}.
  
  \begin{corollary}[{\cite[Corollary 3.3 (1), $a=4$]{2010Yasui}}]\label{cor:C_7}
    $\mathbf{CP}^2 \# 14\overline{\mathbf{CP}^2}$ contains a copy of $C_7$ such that the elements $u_1,u_2,\dots, u_{6}$ of $H_2(C_7;\mathbf{Z})$ in $H_2(\mathbf{CP}^2 \# 14\overline{\mathbf{CP}^2};\mathbf{Z})$ satisfy
    \begin{align}
      &u_{i} = e_{8+i} - e_{9+i} ~ (1\leq i \leq 5), \nonumber\\
      &u_6 = 7h-3e_1 -2e_2 -2e_3 - \dots -2e_{13}-e_{14}.\nonumber
    \end{align}
  \end{corollary}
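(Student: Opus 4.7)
The plan is to extract the claimed embedded copy of $C_7$ directly from the Kirby diagram of $\mathbf{CP}^2 \# 14\overline{\mathbf{CP}^2}$ given in Figure \ref{fig:CP2-14CP2_1}. The handle slides performed at the end of the proof of Proposition \ref{prop:CP2-14CP2}, namely sliding $e_9$ over $e_{10}$, $e_{10}$ over $e_{11}$, $e_{11}$ over $e_{12}$, and $e_{12}$ over $e_{13}$, were chosen precisely so that five of the resulting $2$-handles carry the homology classes $e_9-e_{10}, e_{10}-e_{11}, e_{11}-e_{12}, e_{12}-e_{13}$, and $e_{13}-e_{14}$. These will be the $2$-handles corresponding to $u_1,\dots,u_5$, while $u_6$ is the long $2$-handle lying in the bottom chain of the diagram, produced by the final blowup.

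First I would locate these six $2$-handles in Figure \ref{fig:CP2-14CP2_1} and read off their framings. Since the framing of a $2$-handle equals the square of its homology class in the closed manifold, a direct computation gives $u_i^2 = -2$ for $1 \leq i \leq 5$ and
\[
u_6^2 \;=\; 49 - 9 - 4\cdot 12 - 1 \;=\; -9,
\]
which matches the framings $u_1^2=\dots=u_5^2=-2$ and $u_{6}^2=-9$ required by the definition of $C_7$ (with $p=7$). Next I would verify from the diagram that the attaching circles of these six $2$-handles are unknots arranged in a linear chain: consecutive circles $u_i$ and $u_{i+1}$ are Hopf-linked with linking number $+1$, while any two non-consecutive $u_i$ and $u_j$ have linking number $0$. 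This is exactly the linking pattern of the Kirby diagram of $C_7$ in Figure \ref{fig:Cp&Bp}, so a regular neighborhood in $\mathbf{CP}^2 \# 14\overline{\mathbf{CP}^2}$ of the $0$-handle together with these six $2$-handles is diffeomorphic to $C_7$.

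Finally, the homology classes $u_i$ are already expressed in the stated form because the handle slides in Proposition \ref{prop:CP2-14CP2} were engineered to produce exactly those classes, so no further algebraic manipulation is needed: the claim about $u_6 = 7h - 3e_1 - 2e_2 - \cdots - 2e_{13} - e_{14}$ is read off by tracking through the blowups and slides that led to Figure \ref{fig:CP2-14CP2_1}. The only substantive work is the visual bookkeeping of identifying, strand by strand in Figure \ref{fig:CP2-14CP2_1}, which six $2$-handles play the roles of $u_1,\dots,u_6$ and checking by inspection that their mutual linking numbers and self-framings satisfy the plumbing relations defining $C_7$. That diagram-chasing is the main, and essentially only, obstacle.
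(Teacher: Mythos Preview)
Your outline follows the same strategy as the paper: read off the six $2$-handles in Figure~\ref{fig:CP2-14CP2_1} and check that together they form the linear plumbing $C_7$. The framing computation for $u_6$ is correct, and the identification of $u_1,\dots,u_5$ with the classes produced by the slides in Proposition~\ref{prop:CP2-14CP2} is right in spirit (note that $u_5=e_{13}-e_{14}$ arises from the final blowup, not from the four slides you listed).

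However, you understate the one genuinely nontrivial step and, in your final summary, replace it with something insufficient. Verifying that the six $2$-handles have the correct self-intersections and pairwise linking numbers does \emph{not} show that their union is the $C_7$ plumbing: two framed links with the same linking matrix can give non-diffeomorphic $4$-manifolds. What is actually required is that the attaching circle of $u_6$, once the other handles are ignored, is an \emph{unknot} and sits in a standard Hopf-linked position relative to $u_5$. In Figure~\ref{fig:CP2-14CP2_1} the curve carrying the class $7h-3e_1-\cdots-2e_{13}-e_{14}$ is a long, heavily twisted strand, and it is far from visually obvious that it is unknotted. The paper devotes the bulk of its proof to this point: it isolates that curve (Figure~\ref{fig:attaching_circle_1}) and performs a sequence of explicit isotopies (Figures~\ref{fig:attaching_circle_2}--\ref{fig:attaching_circle_4}), cancelling several full twists, before the unknottedness becomes apparent. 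Your phrase ``checking by inspection'' does not capture this; you should flag the unknottedness of $u_6$ as the main obstacle and indicate how you would carry out that isotopy.
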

  
  \begin{proof}
    In the bottom right part of Figure \ref{fig:CP2-14CP2_1}, we can find a part of the diagram of $C_7$. Their homology classes satisfy the above equations. We can also find the $2$-handle $u_6$, so what we need to check is that the attaching circle of $u_6 = 7h-3e_1-2e_2-2e_3-2e_4-2e_5-2e_6-2e_7-2e_8-2e_9-2e_{10}-2e_{11}-2e_{12}-2e_{13}-e_{14}$ is the unknot. If we delete all the other $2$-handles from Figure \ref{fig:CP2-14CP2_1}, it looks like Figure \ref{fig:attaching_circle_1}. We can isotope this diagram to obtain Figure \ref{fig:attaching_circle_2}. We can find four positive full twists on the right side of Figure \ref{fig:attaching_circle_2}. By an isotopy, we obtain Figure \ref{fig:attaching_circle_3}. We can find another four positive full twists from the left side of Figure \ref{fig:attaching_circle_3}, and we obtain Figure \ref{fig:attaching_circle_4} by canceling those full twists again. The reader can check that this diagram represents the unknot.
  \end{proof}
    
    \begin{figure}[htbp]
      \begin{center}
        \includegraphics[width=10cm]{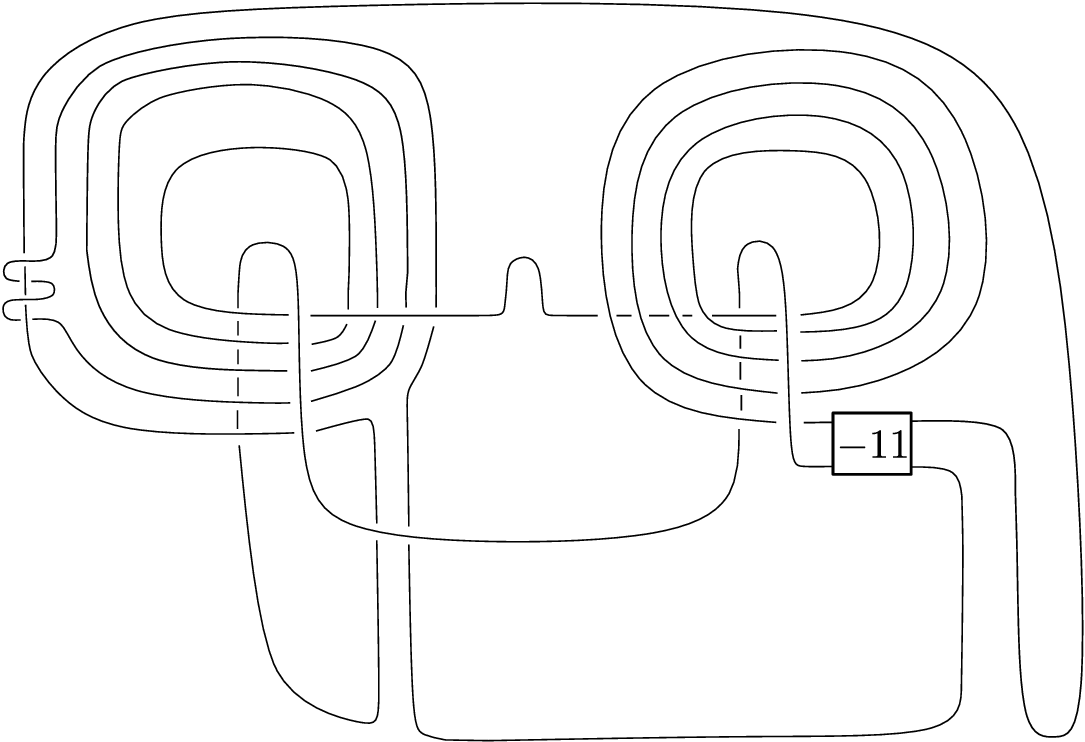}
        \caption{The attaching circle of $7h-3e_1-2e_2-2e_3-2e_4-2e_5-2e_6-2e_7-2e_8-2e_9-2e_{10}-2e_{11}-2e_{12}-2e_{13}-e_{14}$ in Figure \ref{fig:CP2-14CP2_1}.}
        \label{fig:attaching_circle_1}
      \end{center}
    \end{figure}
    
    \begin{figure}[htbp]
      \begin{center}
        \includegraphics[width=10cm]{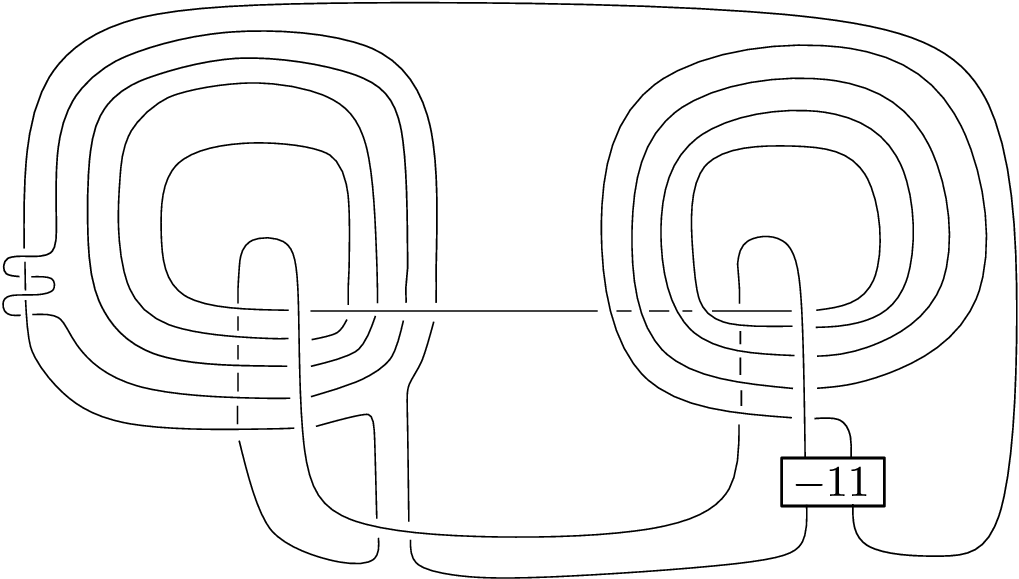}
        \caption{The attaching circle of $7h-3e_1-2e_2-2e_3-2e_4-2e_5-2e_6-2e_7-2e_8-2e_9-2e_{10}-2e_{11}-2e_{12}-2e_{13}-e_{14}$.}
        \label{fig:attaching_circle_2}
      \end{center}
    \end{figure}
    
    \begin{figure}[htbp]
      \begin{center}
        \includegraphics[width=10cm]{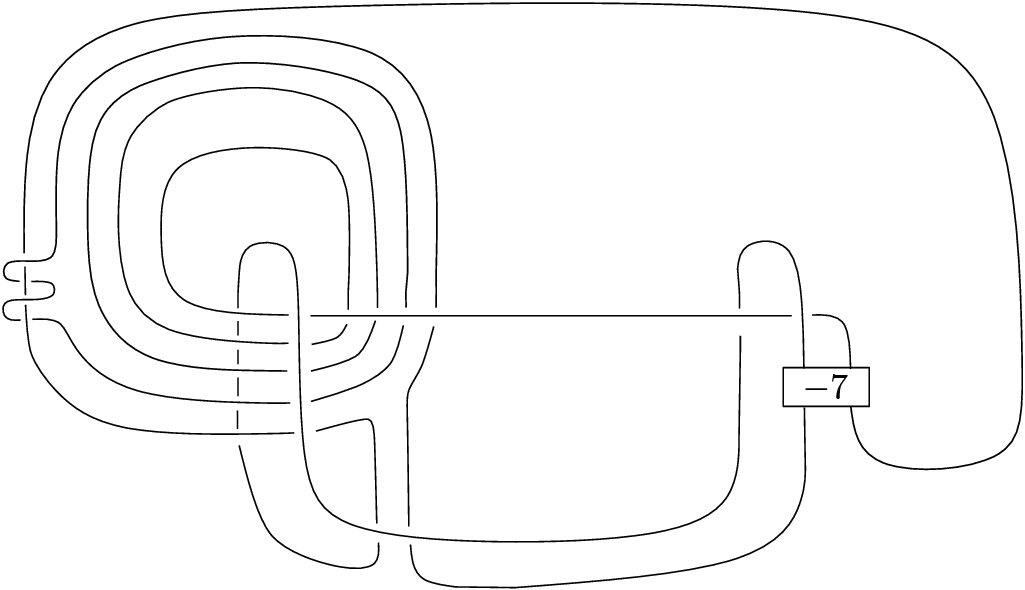}
        \caption{The attaching circle of $7h-3e_1-2e_2-2e_3-2e_4-2e_5-2e_6-2e_7-2e_8-2e_9-2e_{10}-2e_{11}-2e_{12}-2e_{13}-e_{14}$.}
        \label{fig:attaching_circle_3}
      \end{center}
    \end{figure}
    
    \begin{figure}[htbp]
      \begin{center}
        \includegraphics[width=10cm]{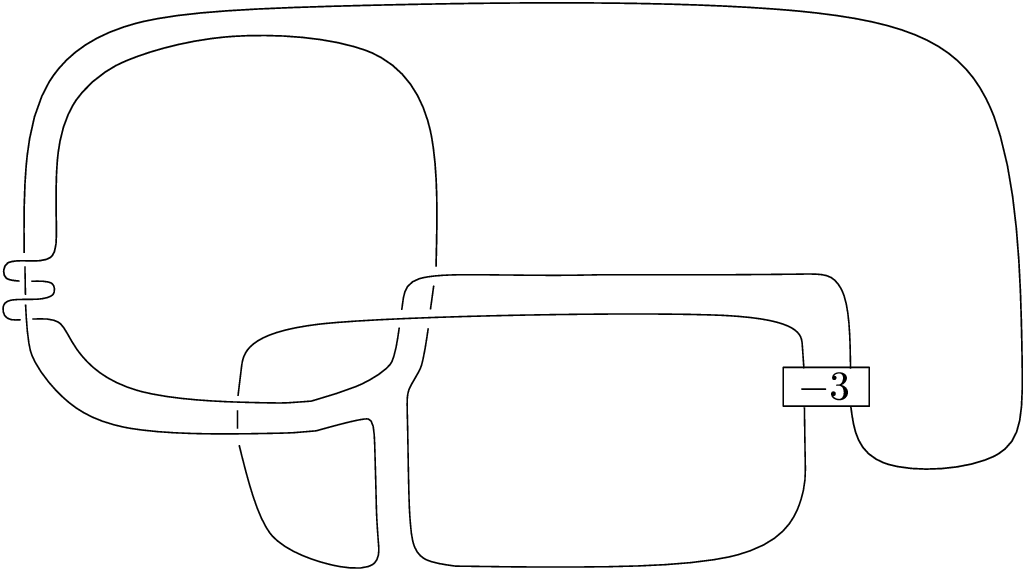}
        \caption{The attaching circle of $7h-3e_1-2e_2-2e_3-2e_4-2e_5-2e_6-2e_7-2e_8-2e_9-2e_{10}-2e_{11}-2e_{12}-2e_{13}-e_{14}$.}
        \label{fig:attaching_circle_4}
      \end{center}
    \end{figure}
  
  \begin{definition}[{\cite[Definition 3.5, $a=4$]{2010Yasui}}]\label{def:R8}
    The rational blowdown of $\mathbf{CP}^2 \allowbreak \# 14 \overline{\mathbf{CP}^2}$ along the embedded copy of $C_7$ in Corollary \ref{cor:C_7} is denoted as $R_8$.
  \end{definition}
  
  \begin{remark}
    Hereafter, we will not need to track the information of the homology classes of $2$-handles. So, we will use framing coefficients to represent the framings of $2$-handles in the following diagrams. We obtain Figure \ref{fig:CP2-14CP2_2} by squaring the homology classes in Figure \ref{fig:CP2-14CP2_1}.
    \end{remark}
  
    \begin{figure}[htbp]
      \begin{center}
        \includegraphics[width=12cm]{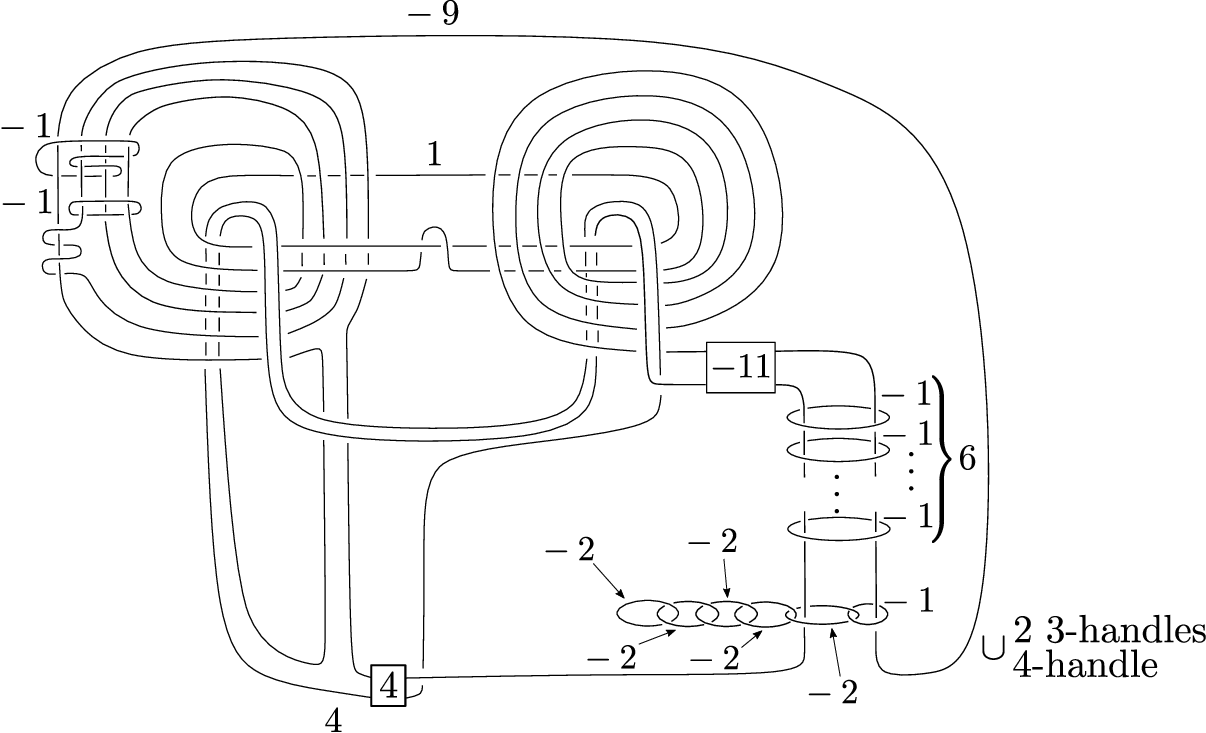}
        \caption{A diagram of $\mathbf{CP}^2 \# 14\overline{\mathbf{CP}^2}$.}
        \label{fig:CP2-14CP2_2}
      \end{center}
    \end{figure}
    
    Now we will draw a diagram of $R_8$. 
    
    \begin{theorem}\label{thm:diagram_of_R8}
      The $4$-manifold $R_8$ admits the handle decomposition in Figure \ref{fig:R8_1}.
      \begin{figure}[htbp]
        \begin{center}
          \includegraphics[width=12cm]{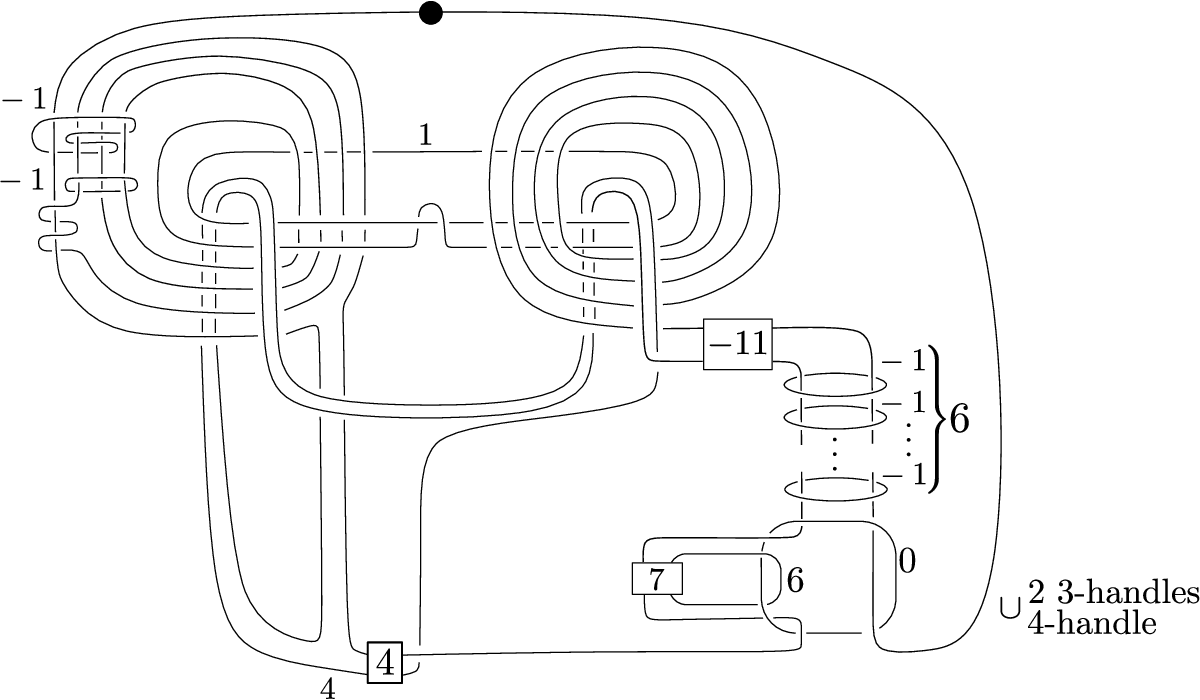}
          \caption{A diagram of $R_8$.}
          \label{fig:R8_1}
        \end{center}
      \end{figure}
    \end{theorem}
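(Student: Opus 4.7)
The plan is to realize the rational blowdown
$R_8 = (\mathbf{CP}^2 \# 14\overline{\mathbf{CP}^2} - \mathrm{int}(C_7)) \cup_\partial B_7$
directly on the Kirby diagram of Figure \ref{fig:CP2-14CP2_2}. By Corollary \ref{cor:C_7}, the embedded copy of $C_7$ sits visibly in the bottom-right of that diagram as the linear chain of six $2$-handles $u_1, u_2, \ldots, u_6$, whose framing coefficients (obtained by squaring the homology classes listed in the corollary) are $-2, -2, -2, -2, -2, -9$, matching the plumbing data of $C_7$ from Figure \ref{fig:Cp&Bp}.

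First, I would delete these six $2$-handles from Figure \ref{fig:CP2-14CP2_2}; diagrammatically this represents removing the interior of the embedded $C_7$. Next, I would glue in $B_7$ along the common boundary by using the standard diagrammatic recipe for the rational blowdown (see \cite[Section~8.5]{1999Gompf-Stipsicz}): one inserts the Kirby diagram of $B_7$ from Figure \ref{fig:Cp&Bp}, consisting of one dotted $1$-handle together with one $2$-handle, into the slot formerly occupied by the chain $u_1,\ldots,u_6$, with the attaching circle of the new $2$-handle linking the dotted circle and the surrounding handles in the pattern prescribed by the local $C_p \leftrightsquigarrow B_p$ replacement. By Definition \ref{def:RBD} the resulting manifold is independent of the choice of boundary gluing, and by Definition \ref{def:R8} it is $R_8$.

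It then remains to check that after this replacement and a short isotopy the diagram matches Figure \ref{fig:R8_1}. The main obstacle, and essentially the only nontrivial content of the argument, is the local bookkeeping near the vanished $C_7$: one must verify that the attaching circle of the new $2$-handle threads the $1$-handle of $B_7$ correctly and links the remaining handles of Figure \ref{fig:CP2-14CP2_2} (in particular the $2$-handle that used to be adjacent to $u_1$, and the global structure that previously met $u_6$) in exactly the configuration drawn in Figure \ref{fig:R8_1}, without introducing extraneous crossings or modifying framings elsewhere. Once this local picture is confirmed, the handles $h, 2h, e_1,\ldots,e_8, e_{14}$ and their linking are inherited unchanged from Figure \ref{fig:CP2-14CP2_2}, and the resulting global diagram is precisely Figure \ref{fig:R8_1}.
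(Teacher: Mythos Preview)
Your proposal is correct and takes essentially the same approach as the paper: both apply the standard diagrammatic recipe for replacing the visible $C_7$ sub-plumbing by $B_7$ to obtain Figure~\ref{fig:R8_1}. The only difference is bibliographic---the paper cites \cite[Figure~16]{2008Akbulut-Yasui} for the explicit local $C_p \to B_p$ replacement picture rather than \cite[Section~8.5]{1999Gompf-Stipsicz}, and that reference makes the ``local bookkeeping'' step you flag immediate.
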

    
    \begin{proof}
      Recall that there is a procedure to draw a diagram of a rational blowdown \cite[Figure 16]{2008Akbulut-Yasui}. If we apply this procedure to the copy of $C_7$ in Figure \ref{fig:CP2-14CP2_1}, we obtain Figure \ref{fig:R8_1}.
    \end{proof}

  \begin{remark}\label{rmk:C_7}
    In \cite[Proposition 3.9]{2010Yasui}, Yasui showed that a $4$-manifold obtained by his construction is homeomorphic to $\mathbf{CP}^2 \# 8\overline{\mathbf{CP}^2}$. To prove that it is homeomorphic to $\mathbf{CP}^2 \# 8\overline{\mathbf{CP}^2}$, he checked the simply-connectedness of his manifold to apply Rochlin's theorem and Freedman's theorem. He also proved that a $4$-manifold obtained by his construction is not diffeomorphic to $\mathbf{CP}^2 \# 8\overline{\mathbf{CP}^2}$ by showing that its Seiberg-Witten invariant is non-trivial \cite[Lemma 5.1 (1)]{2010Yasui}. To prove the non-triviality of the Seiberg-Witten invariant, he first calculated a non-trivial value of the Seiberg-Witten invariant of $\mathbf{CP}^2 \# 14\overline{\mathbf{CP}^2}$. Then he used the information of the homology classes of the $2$-handles of $C_7$ to apply the theorems of Fintushel and Stern \cite{1997Fintushel-Stern} on the Seiberg-Witten invariant of the rational blowdown, and proved that a value of the Seiberg-Witten invariant of his manifold coincides with the above non-trivial value of the Seiberg-Witten invariant of $\mathbf{CP}^2 \# 14\overline{\mathbf{CP}^2}$. 
    
    We followed Yasui's procedure \cite{2010Yasui} to construct $C_7$ in $\mathbf{CP}^2\allowbreak \# \allowbreak 14\overline{\mathbf{CP}^2}$ with small modifications. However, we can still apply the same arguments to prove that $R_8$ is an exotic $\mathbf{CP}^2 \# 8\overline{\mathbf{CP}^2}$. First, by sliding a $-1$-framed unknot over the $0$-framed unknot in the diagram in Figure \ref{fig:R8_1}, we obtain a diagram of $R_8$ where we can cancel the unique $1$-handle. So $R_8$ is simply-connected, and one can check that $R_8$ is homeomorphic to $\mathbf{CP}^2 \# 8\overline{\mathbf{CP}^2}$ by applying Rochlin's theorem and Freedman's theorem as in Yasui's argument. Also, the homology classes of the $2$-handles of $C_7$ in Figure \ref{fig:CP2-14CP2_1} and those in \cite[Corollary 3.3 (1)]{2010Yasui} are the same. Therefore, the non-triviality of the Seiberg-Witten invariant of $R_8$ has also been proved by Yasui's argument. Thus, we obtain the following theorem.
  \end{remark}

  \begin{theorem}[cf. {\cite[Corollary 5.2 (1)]{2010Yasui}}]\label{thm:R8}
    The $4$-manifold $R_8$ is homeomorphic but not diffeomorphic to $\mathbf{CP}^2 \# 8\overline{\mathbf{CP}^2}$.
  \end{theorem}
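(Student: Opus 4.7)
The plan is to follow the outline already sketched in Remark \ref{rmk:C_7}, which decomposes the theorem into a homeomorphism statement and a non-diffeomorphism statement. The homeomorphism part will come from Freedman's classification applied to a computation of the intersection form; the non-diffeomorphism part will come from comparing Seiberg--Witten invariants, using the Fintushel--Stern rational blowdown formula and a known nontrivial basic class of $\mathbf{CP}^2 \# 14\overline{\mathbf{CP}^2}$.

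For the homeomorphism claim, I would first show $R_8$ is simply-connected by working with the diagram in Figure \ref{fig:R8_1}. As noted in Remark \ref{rmk:C_7}, sliding one of the $-1$-framed unknots over the $0$-framed unknot produces a diagram in which the unique $1$-handle is canceled by a $2$-handle. Since the resulting diagram has no $1$-handles, $R_8$ is simply-connected. Next I would check that $R_8$ is closed, oriented, and has the correct intersection form: it has $b_2 = 9$, odd intersection form, and signature $-7$, matching $\mathbf{CP}^2 \# 8\overline{\mathbf{CP}^2}$. This follows from Definition \ref{def:RBD} since the rational blowdown along $C_p$ replaces a piece with $b_2 = p-1$ and signature $-(p-1)$ by a piece with $b_2 = 0$ and signature $0$; for $p = 7$ we go from $b_2 = 14$ down to $b_2 = 8$ on the $\overline{\mathbf{CP}^2}$-side (keeping $h$), with signature going from $-13$ to $-7$. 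Freedman's theorem (combined with Rochlin to rule out the even intersection form) then gives the homeomorphism.

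For the non-diffeomorphism claim, the plan is to compute (a single value of) the Seiberg--Witten invariant of $R_8$ and show it is nonzero, since the Seiberg--Witten invariants of $\mathbf{CP}^2 \# k\overline{\mathbf{CP}^2}$ (with the small-perturbation convention) vanish because this manifold carries a positive scalar curvature metric. I would apply the rational blowdown theorem of Fintushel--Stern, which states that if $L$ is a characteristic class on $\mathbf{CP}^2 \# 14\overline{\mathbf{CP}^2}$ restricting to a prescribed characteristic element on $\partial C_7 = \partial B_7$, then $L$ descends to a class on $R_8$ whose Seiberg--Witten invariant equals that of $L$ on $\mathbf{CP}^2 \# 14\overline{\mathbf{CP}^2}$. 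Using the explicit homology classes of the generators $u_1, \dots, u_6$ of $H_2(C_7; \mathbf{Z})$ supplied by Corollary \ref{cor:C_7}, I would verify that a suitable basic class of $\mathbf{CP}^2 \# 14\overline{\mathbf{CP}^2}$ meets this boundary condition and hence transfers to a nontrivial basic class of $R_8$. Since the classes of $u_1, \dots, u_6$ here are identical to those in \cite[Corollary 3.3 (1)]{2010Yasui}, the identical computation as Yasui's works verbatim, and no new verification is needed beyond citing his result.

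The main obstacle is really administrative rather than conceptual: one must be careful that our Kirby-calculus conventions (in particular, the cosmetic differences from Yasui's diagram noted in Remark \ref{plumbing}) do not alter the homology classes $u_i$ nor the ambient basic class on $\mathbf{CP}^2 \# 14\overline{\mathbf{CP}^2}$, so that the rational blowdown formula produces the same value as Yasui obtained. Once that book-keeping is checked, both halves of the theorem reduce cleanly to Freedman's classification on one side and Yasui's Seiberg--Witten computation on the other.
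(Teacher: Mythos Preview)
Your proposal is correct and follows essentially the same approach as the paper: the paper's proof is precisely the content of Remark~\ref{rmk:C_7}, which you have faithfully unpacked---simple-connectedness from the handle cancellation in Figure~\ref{fig:R8_1}, Freedman/Rochlin for the homeomorphism, and then the observation that the homology classes of $u_1,\dots,u_6$ coincide with those in \cite[Corollary~3.3~(1)]{2010Yasui} so that Yasui's Seiberg--Witten computation via the Fintushel--Stern formula applies verbatim to give non-diffeomorphism.
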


\section{Finding a cork}\label{section:finding_a_cork}
  
  In this section, we prove Theorem \ref{thm:cork} and Corollary \ref{cor:stab}. To prove Theorem \ref{thm:cork}, we find an embedding of a cork into $R_8$ by using the diagram in Corollary \ref{thm:diagram_of_R8}, and show that the cork twist of $R_8$ along this cork results in the standard $\mathbf{CP}^2 \# 8\overline{\mathbf{CP}^2}$. First, let us briefly review the basic terminology associated with corks.

  \begin{definition}\label{def:cork}
    Let $(C,\tau)$ be a pair of a compact contractible $4$-manifold with boundary and an involution $\tau$ on the boundary $\partial C$. We call $(C,\tau)$ a \textit{cork} if $\tau$ does not extend to any self-diffeomorphism on $C$. Let $i$ be an embedding of $C$ into a $4$-manifold $X$. The $4$-manifold $X_{(C,\tau, i)}:= (X - \textrm{int}(i(C))) \cup_{i\circ\tau} C$ is called the \textit{cork twist of $X$ along $(C,\tau, i)$}. We say $i$ is an \textit{effective embedding of $(C, \tau)$ into $X$} if $X_{(C,\tau, i)}$ is not diffeomorphic to $X$. Note that for any cork $(C,\tau)$ and any embedding $i$ of $C$ into X, the cork twist $X_{(C,\tau, i)}$ is always homeomorphic to $X$ by Freedman's result \cite{1982Freedman}. A cork $(C,\tau)$ is called a \textit{cork of} a $4$-manifold $X$ if there exists an effective embedding of $(C, \tau)$ into $X$.
  \end{definition}

  \begin{lemma}\label{lem:finding_W2}
    \textup{(1)} The $4$-manifold $R_8$ admits the handle decomposition in Figure \ref{fig:R8_2}.\\
    \textup{(2)} The handle decomposition in Figure \ref{fig:R8_2} contains a subhandlebody diffeomorphic to $W_2$.
  \end{lemma}

  \begin{proof}
    (1) Figure \ref{fig:R8_local_1} (a) shows a local part of the diagram of $R_8$ in Figure \ref{fig:R8_1}. For the following argument, we often draw the local parts of the diagrams unless necessary. As mentioned in Section \ref{convention}, the parts of the diagrams not drawn in the figures are always fixed. We slide $-1$-framed unknot over the $0$-framed unknot, and then we isotope the diagram to obtain Figure \ref{fig:R8_local_1} (b). We can isotope this diagram to Figure \ref{fig:R8_local_1} (c). We slide the $0$-framed unknot over the $6$-framed unknot to obtain Figure \ref{fig:R8_local_1} (d). By isotopies, we obtain Figure \ref{fig:R8_local_1} (e), (f), and then Figure \ref{fig:R8_local_2} (a). We slide the $4$-framed unknot over the $-1$-framed unknot and then isotope the diagram to obtain Figure \ref{fig:R8_local_2} (b). By another isotopy, we obtain Figure \ref{fig:R8_local_2} (c). By sliding the $0$-framed unknot in the diagram over one of the $-1$-unknots, we obtain Figure \ref{fig:R8_local_2} (d). Now the entire diagram looks like Figure \ref{fig:R8_2}. Thus, $R_8$ admits the handle decomposition in Figure \ref{fig:R8_2}.
    
    (2) The handle decomposition in Figure \ref{fig:R8_2} contains the subhandlebody described in Figure \ref{fig:W2_in_R8_1}. By isotoping this diagram likewise in Figure \ref{fig:attaching_circle_1}, \ref{fig:attaching_circle_2}, \ref{fig:attaching_circle_3}, and \ref{fig:attaching_circle_4}, we obtain Figure \ref{fig:W2_in_R8_2} (a). We isotope this diagram to obtain Figure \ref{fig:W2_in_R8_2} (b). By another isotopy, we can deform this diagram to Figure \ref{fig:W2_in_R8_2} (c), a diagram of $W_2$. Therefore, the handle decomposition in Figure \ref{fig:R8_2} contains a subhandlebody diffeomorphic to $W_2$.
  \end{proof}

    \begin{figure}[htbp]
      \begin{center}
        \includegraphics[width=12cm]{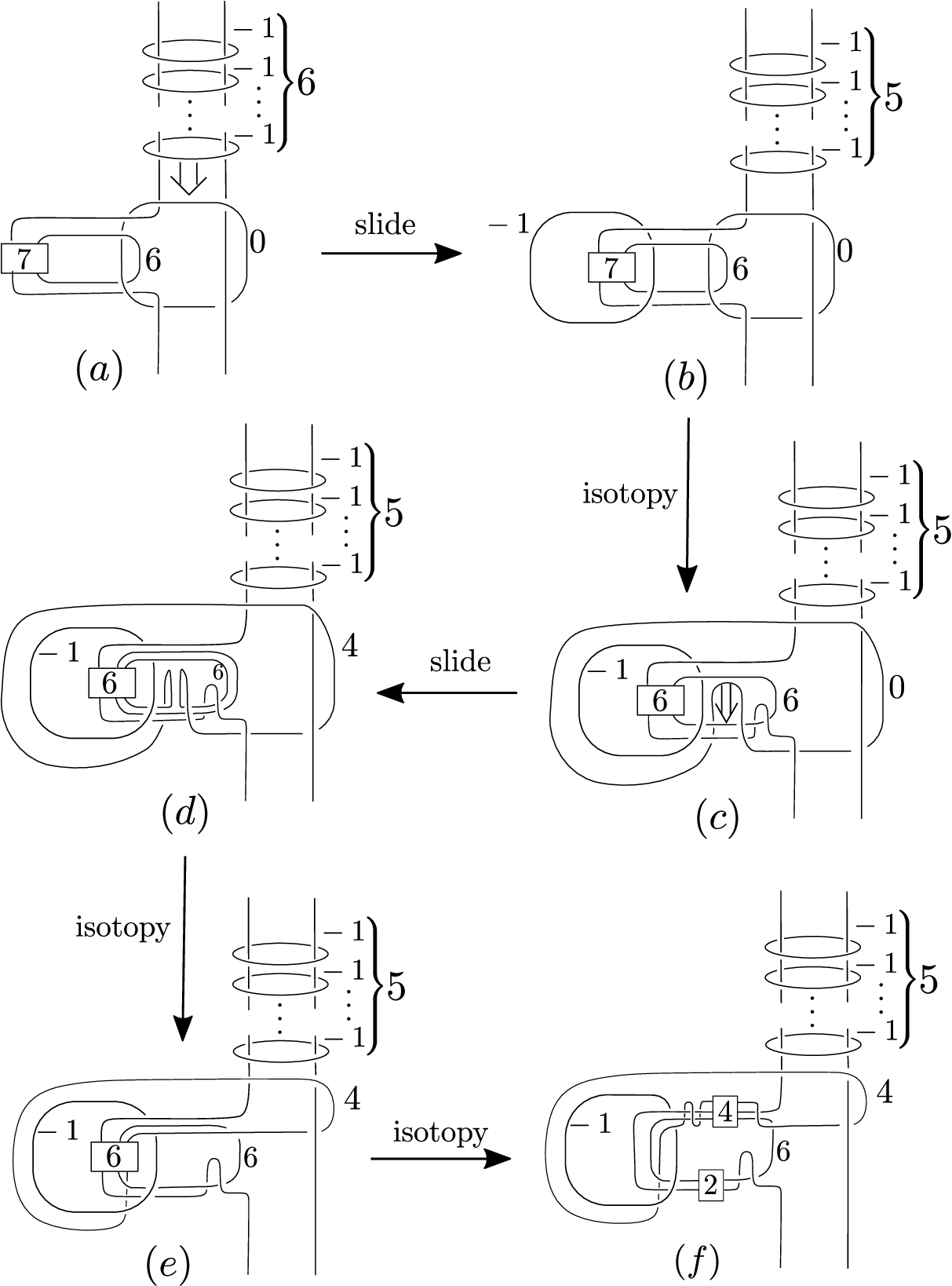}
        \caption{Local deformations of Figure \ref{fig:R8_1}.}
        \label{fig:R8_local_1}
      \end{center}
    \end{figure}
    
    \begin{figure}[htbp]
      \begin{center}
        \includegraphics[width=12cm]{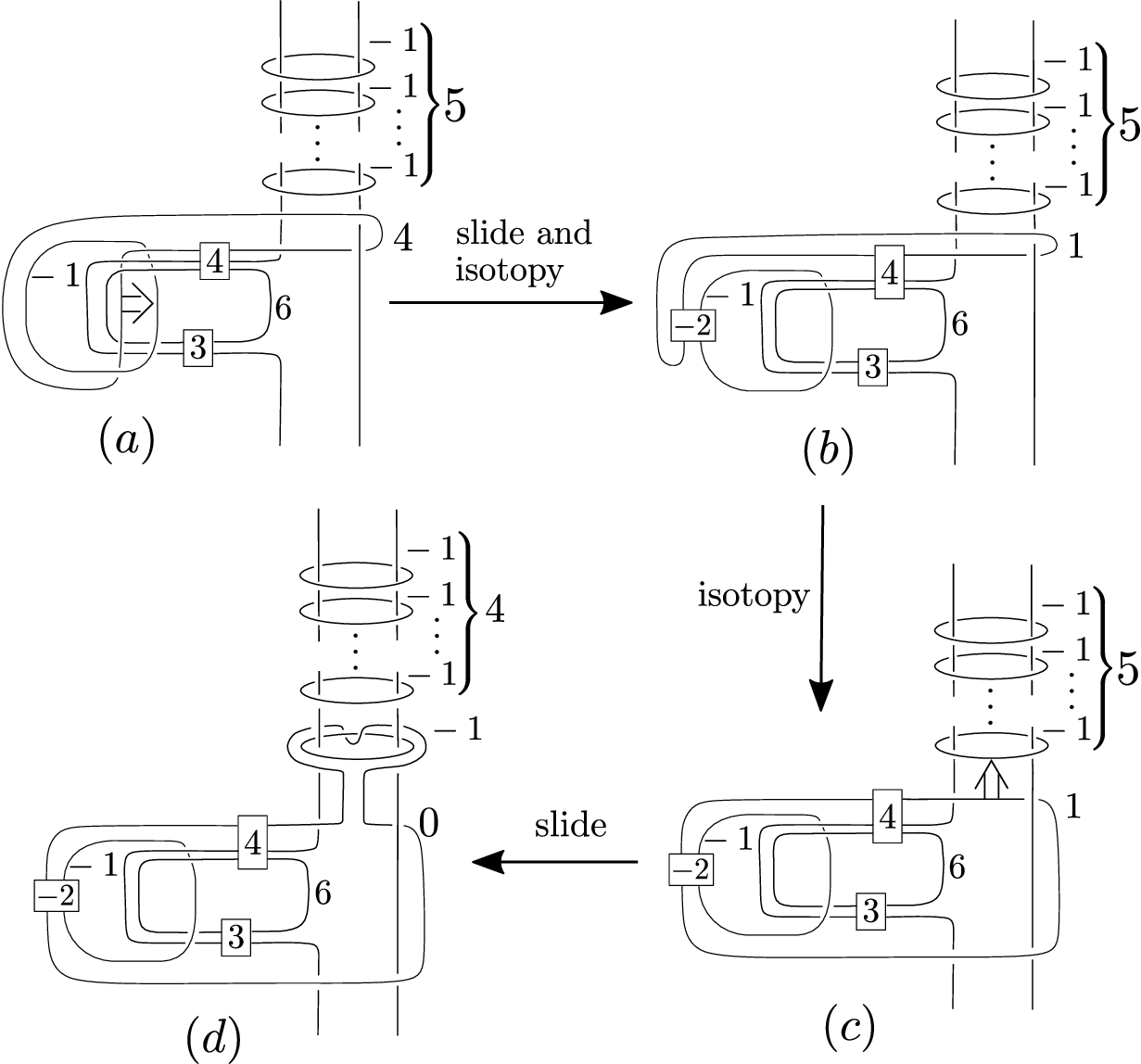}
        \caption{Local deformations of Figure \ref{fig:R8_1}.}
        \label{fig:R8_local_2}
      \end{center}
    \end{figure}
    
    \begin{figure}[htbp]
      \begin{center}
        \includegraphics[width=13cm]{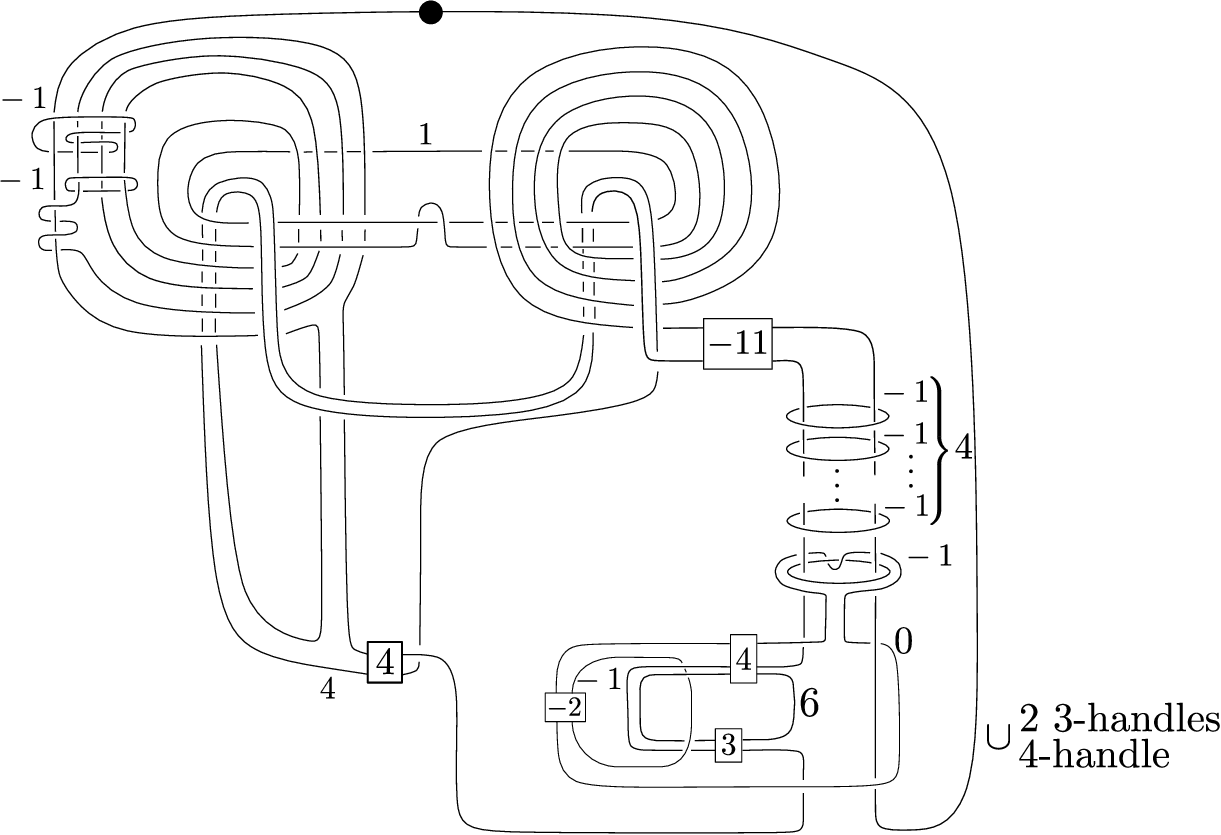}
        \caption{Another diagram of $R_8$.}
        \label{fig:R8_2}
      \end{center}
    \end{figure}
    
    \begin{figure}[htbp]
      \begin{center}
        \includegraphics[width=10cm]{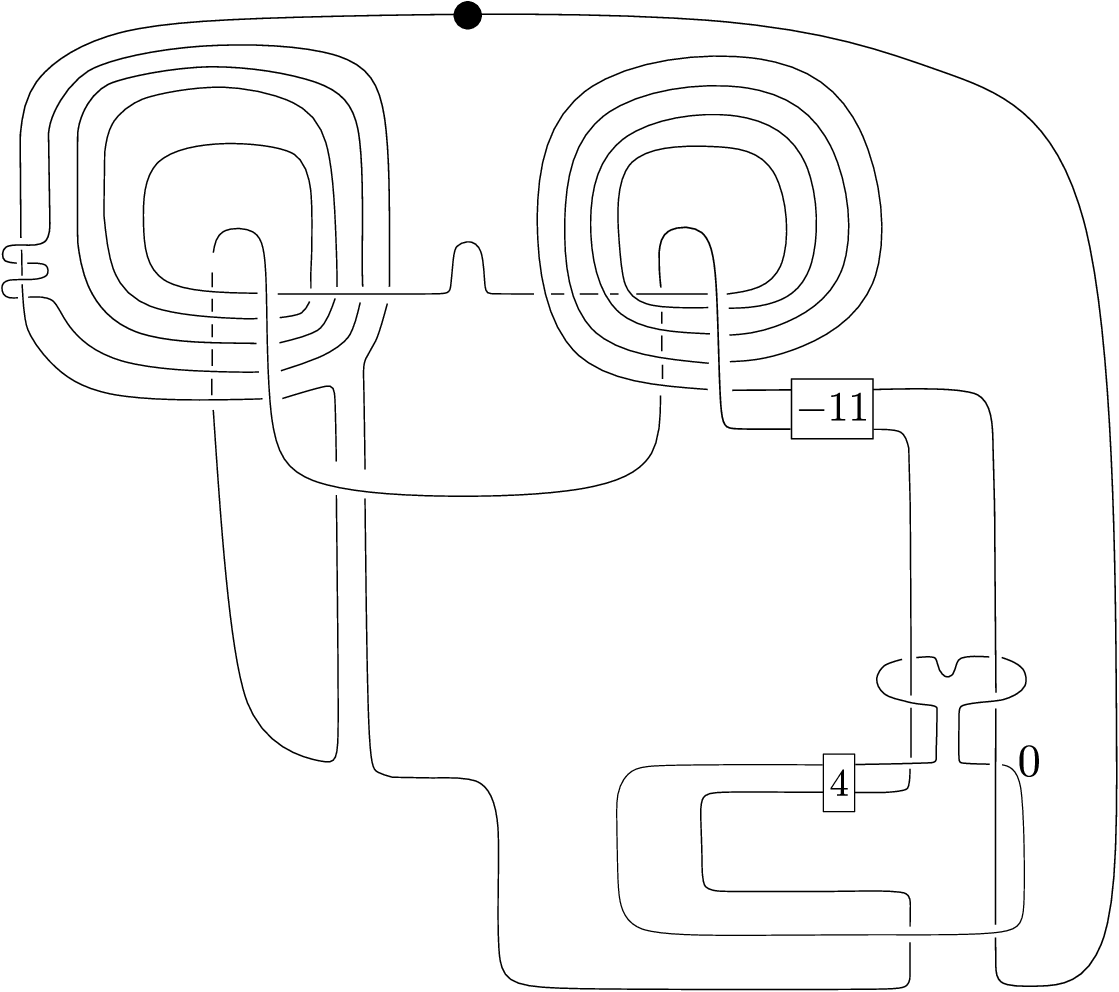}
        \caption{A subhandlebody contained in the handle decomposition in Figure \ref{fig:R8_2}.}
        \label{fig:W2_in_R8_1}
      \end{center}
    \end{figure}
    
    \begin{figure}[htbp]
      \begin{center}
        \includegraphics[width=13cm]{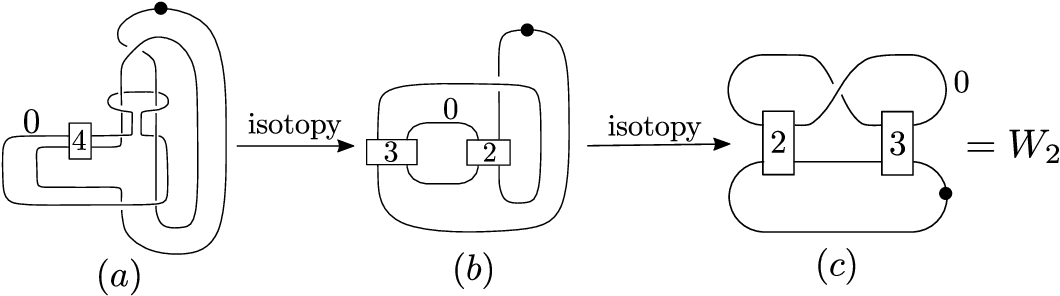}
        \caption{Isotopies.}
        \label{fig:W2_in_R8_2}
      \end{center}
    \end{figure}

  Now we are ready to prove Theorem \ref{thm:cork}.
  
  \begin{proof}[Proof of Theorem \ref{thm:cork}]
    From Lemma \ref{lem:finding_W2}, we see that there exists an embedding of $W_2$ into $R_8$. We call this embedding $i$. It is enough to show that the cork twist of $R_8$ along $(W_2,f_2, i)$ results in $\mathbf{CP}^2 \# 8\overline{\mathbf{CP}^2}$. Figure \ref{fig:R8_corktwist} shows a diagram of the cork twist of $R_8$ along $(W_2,f_2, i)$. Figure \ref{fig:R8_corktwist_local_1} (a) shows the bottom right part of this diagram. By following the steps described in Figure \ref{fig:R8_corktwist_local_1}, we obtain Figure \ref{fig:R8_corktwist_local_1} (f). After performing a handle slide as shown in this figure, we can isotope the diagram to obtain Figure \ref{fig:R8_corktwist_local_2} (a). By following the steps described in Figure \ref{fig:R8_corktwist_local_2}, we obtain Figure \ref{fig:R8_corktwist_local_2} (f). Note that the deformation from Figure \ref{fig:R8_corktwist_local_2} (b) to Figure \ref{fig:R8_corktwist_local_2} (c), in which the dot and $0$ are exchanged, gives a diffeomorphism because the $0$-framed unknot geometrically links with the dotted circle once, so this exchange represents cutting out a $4$-ball and pasting it back. After a handle slide and isotopy, we obtain Figure \ref{fig:R8_corktwist_local_3} (a). By following the steps described in Figure \ref{fig:R8_corktwist_local_3}, we obtain Figure \ref{fig:R8_corktwist_local_3} (f). After a handle slide and isotopy, we obtain Figure \ref{fig:R8_corktwist_local_4} (a). By following the steps described in Figure \ref{fig:R8_corktwist_local_4}, we obtain Figure \ref{fig:R8_corktwist_local_4} (d). The whole Kirby diagram is shown in Figure \ref{fig:CP2-8CP2_1}, and we obtain Figure \ref{fig:CP2-8CP2_2} by an isotopy. Figure \ref{fig:CP2-8CP2_2} represents $\mathbf{CP}^2 \# 8\overline{\mathbf{CP}^2}$ since we can obtain this diagram after performing $6$ blowups in Figure \ref{fig:CP2-2CP2_5} in the same manner we obtained Figure \ref{fig:CP2-13CP2_1}.
  \end{proof}
    \begin{figure}[htbp]
      \begin{center}
        \includegraphics[width=14cm]{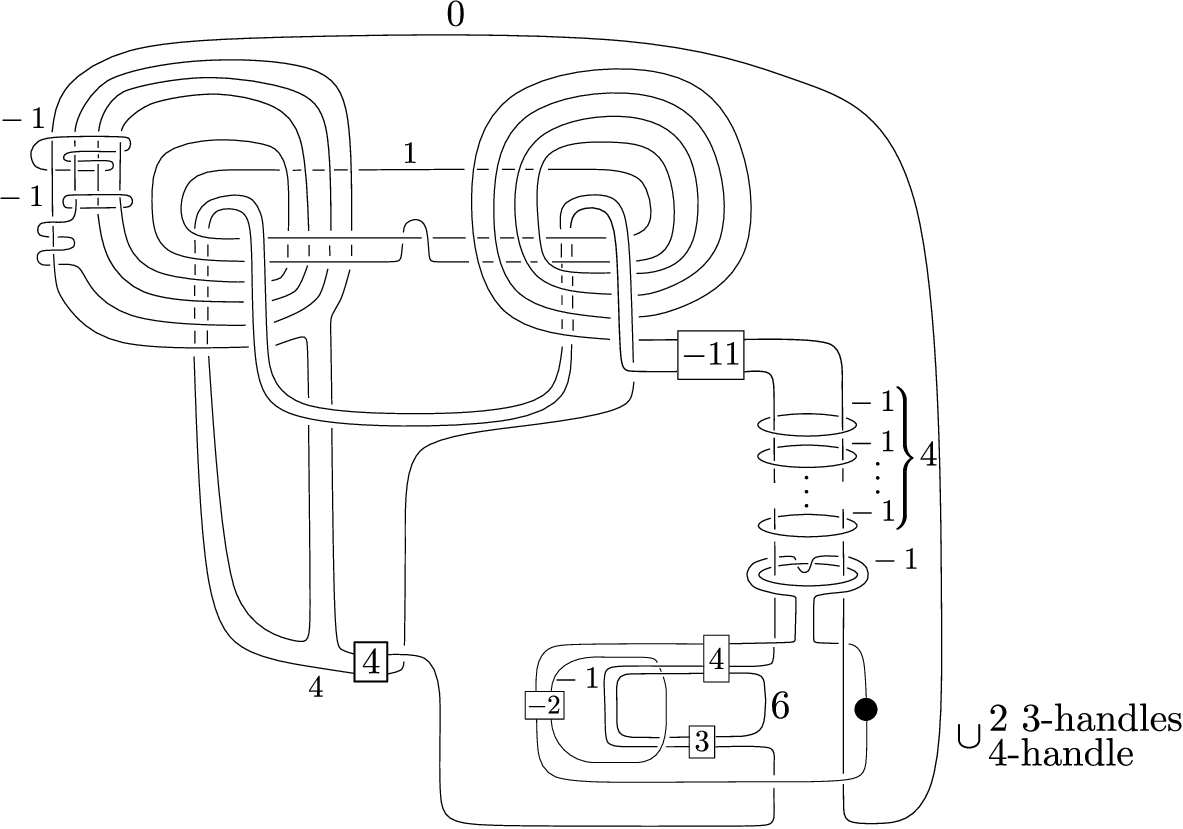}
        \caption{The cork twist of $R_8$ along $(W_2,f_2)$.}
        \label{fig:R8_corktwist}
      \end{center}
    \end{figure}
    \begin{figure}[htbp]
      \begin{center}
        \includegraphics[width=11cm]{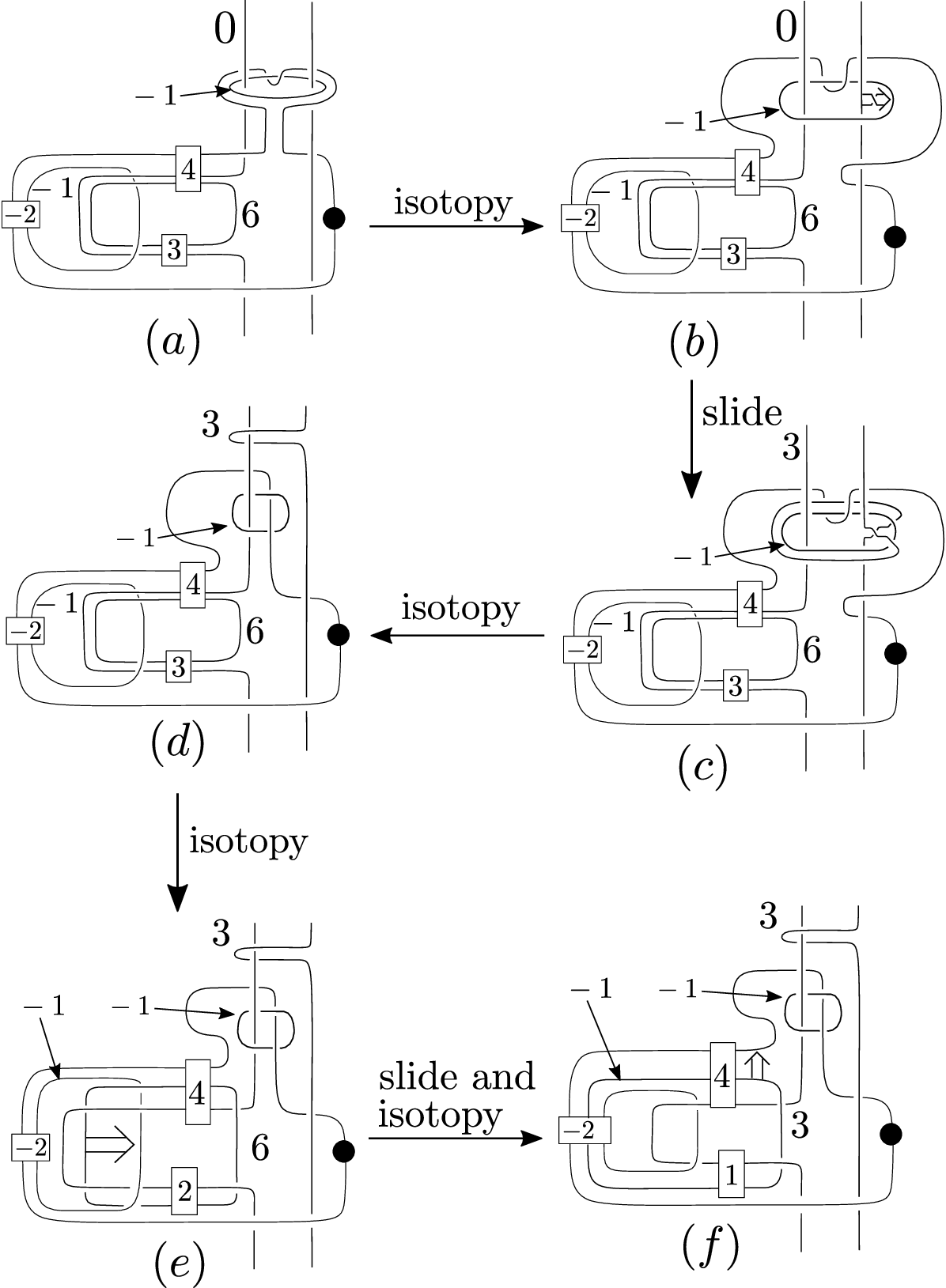}
        \caption{Local deformations of Figure \ref{fig:R8_corktwist}.}
        \label{fig:R8_corktwist_local_1}
      \end{center}
    \end{figure}
    \begin{figure}[htbp]
      \begin{center}
        \includegraphics[width=11cm]{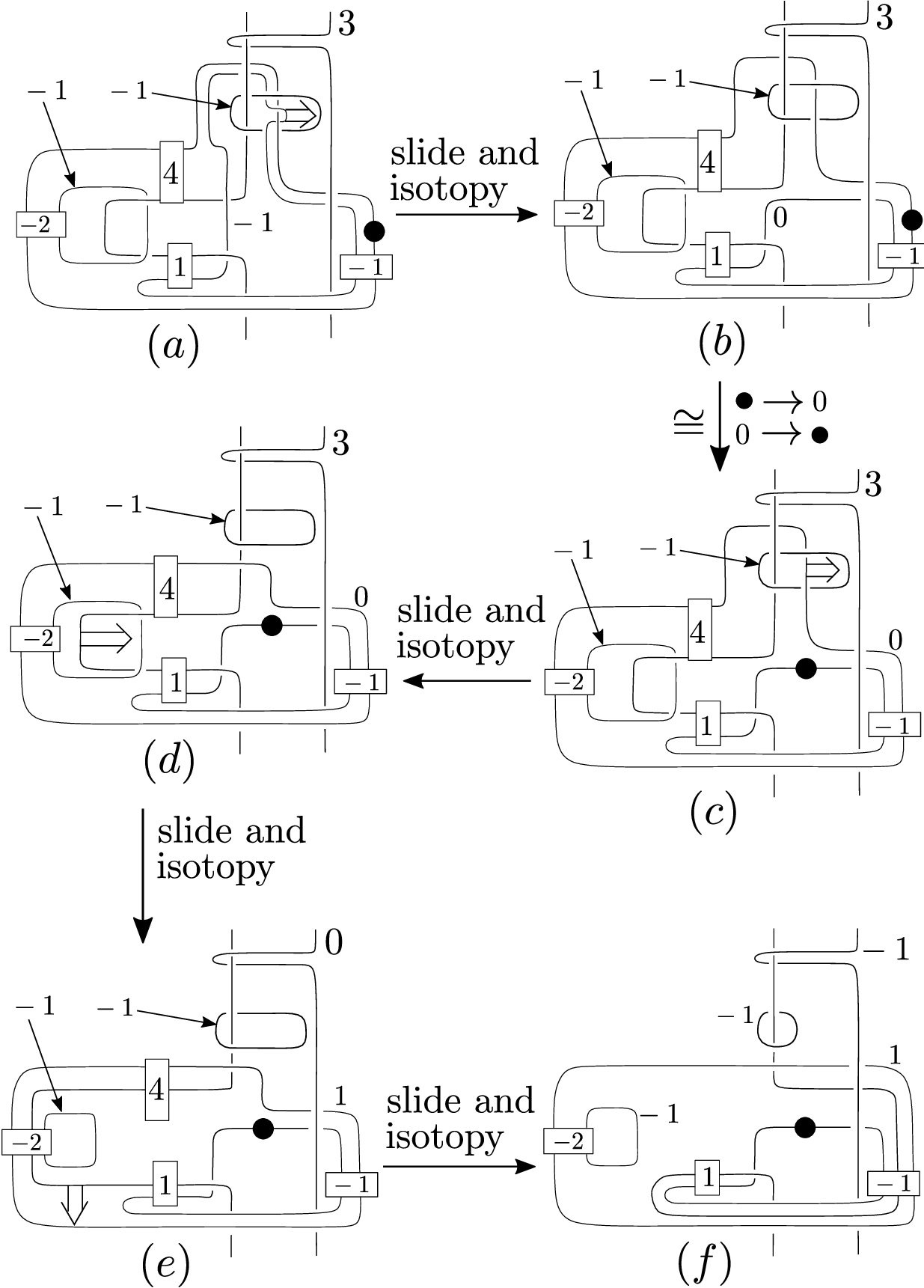}
        \caption{Local deformations of Figure \ref{fig:R8_corktwist}.}
        \label{fig:R8_corktwist_local_2}
      \end{center}
    \end{figure}
    \begin{figure}[htbp]
      \begin{center}
        \includegraphics[width=11cm]{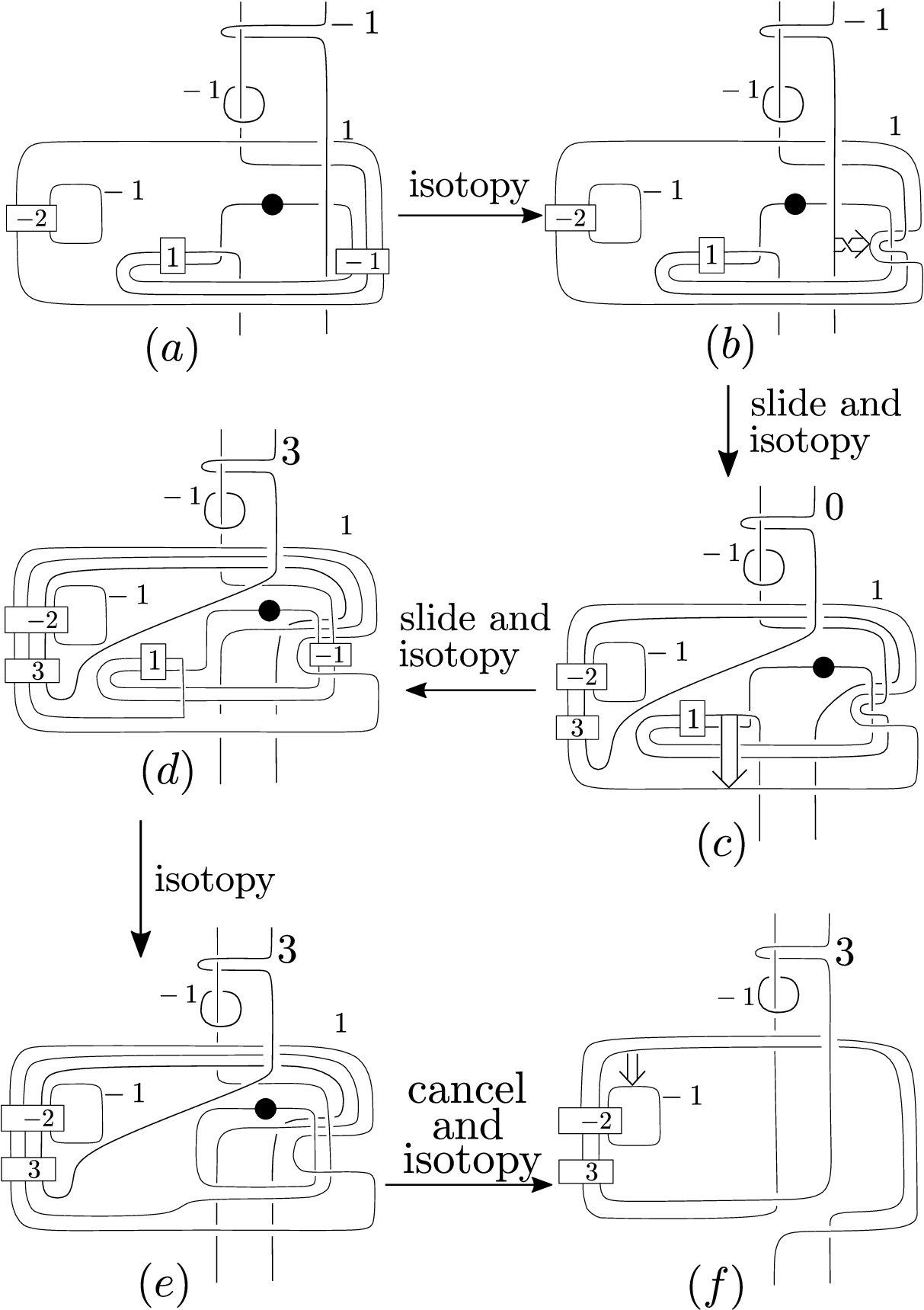}
        \caption{Local deformations of Figure \ref{fig:R8_corktwist}.}
        \label{fig:R8_corktwist_local_3}
      \end{center}
    \end{figure}
    \begin{figure}[htbp]
      \begin{center}
        \includegraphics[width=10cm]{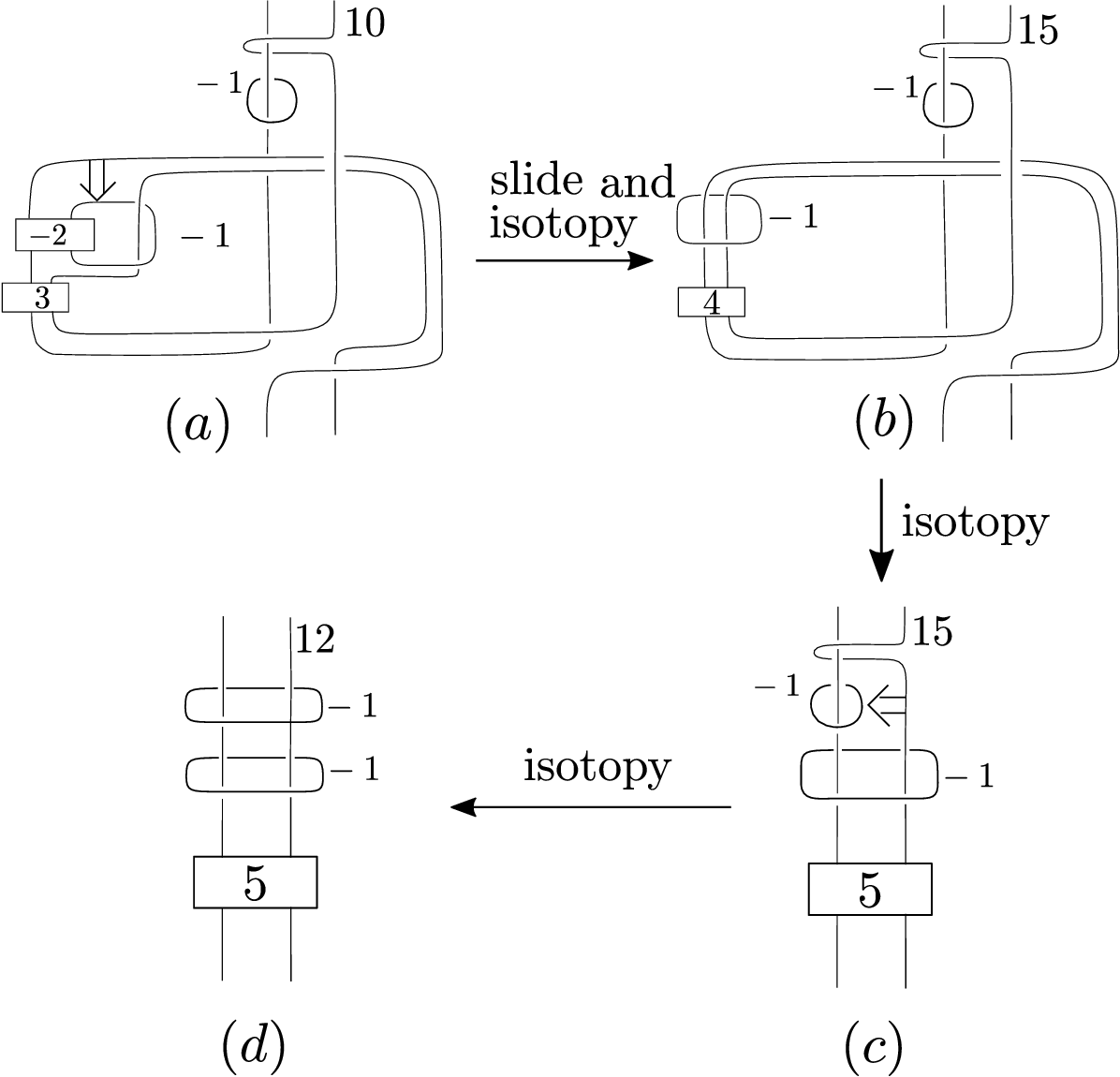}
        \caption{Local deformations of Figure \ref{fig:R8_corktwist}.}
        \label{fig:R8_corktwist_local_4}
      \end{center}
    \end{figure}
    \begin{figure}[htbp]
      \begin{center}
        \includegraphics[width=12cm]{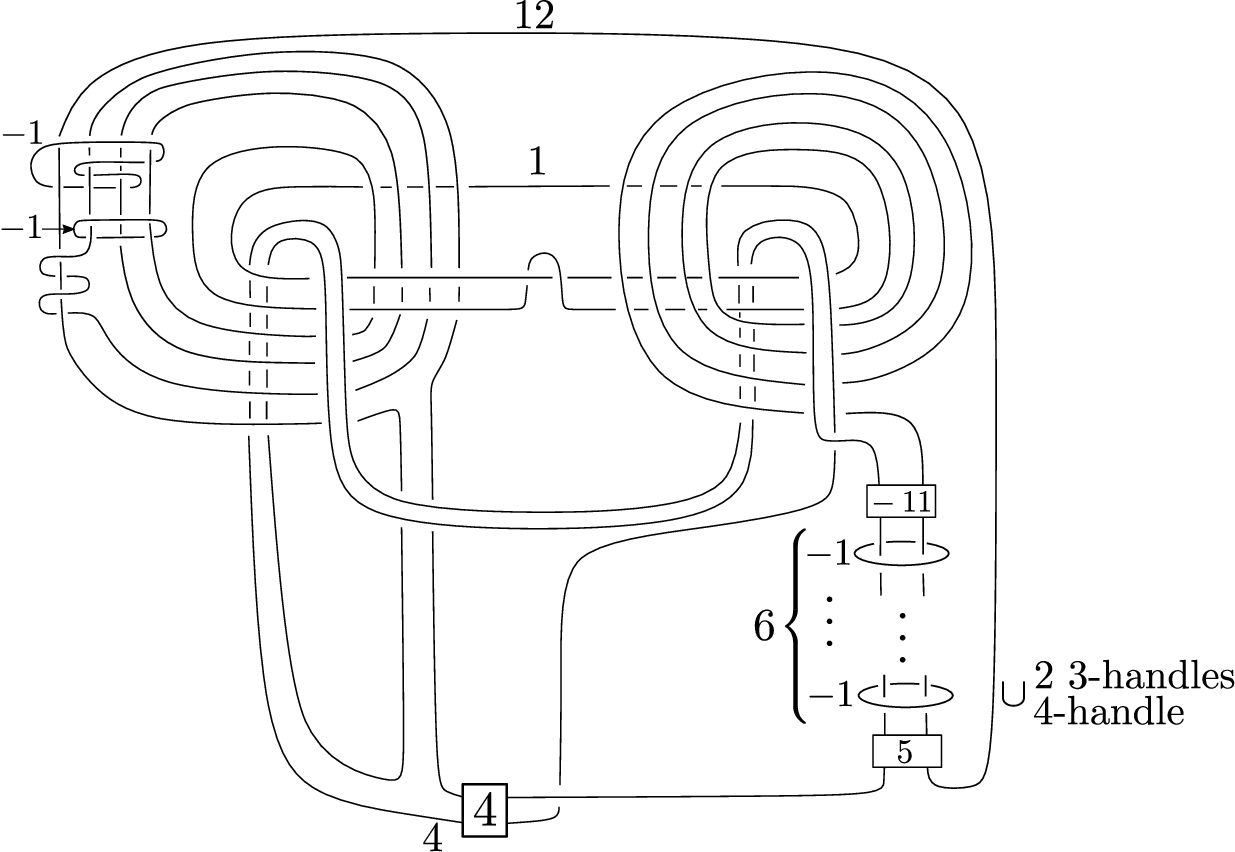}
        \caption{The entire picture of Figure \ref{fig:R8_corktwist_local_4} (d).}
        \label{fig:CP2-8CP2_1}
      \end{center}
    \end{figure}
    \begin{figure}[htbp]
      \begin{center}
        \includegraphics[width=12cm]{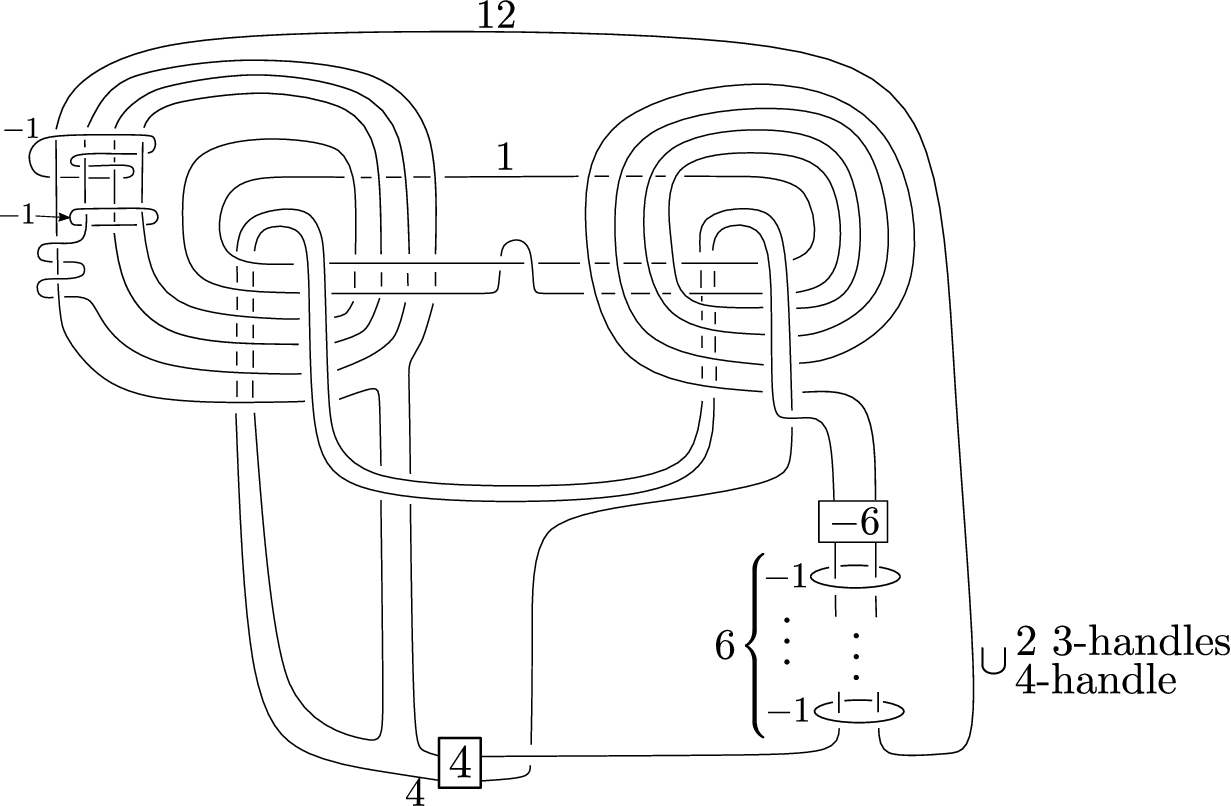}
        \caption{A diagram of $\mathbf{CP}^2 \# 8\overline{\mathbf{CP}^2}$.}
        \label{fig:CP2-8CP2_2}
      \end{center}
    \end{figure}

  \begin{proof}[Proof of Corollary \ref{cor:stab}]
    First, recall that if $M$ is a simply-connected $4$-manifold, the result of the surgery along any embedded $S^1$ must be diffeomorphic to either $M\# S^2\! \times \! S^2$ or $M\# S^2 \tilde{\times} S^2$ (\cite[Proposition 5.2.3]{1999Gompf-Stipsicz}). Furthermore, if $M$ is a non-spin simply-connected $4$-manifold, then $M\# S^2 \! \times \! S^2$ and $M\# S^2 \tilde{\times} S^2$ are diffeomorphic (\cite[Proposition 5.2.4]{1999Gompf-Stipsicz}). Since the $4$-manifolds $R_8$ and $\mathbf{CP}^2 \# 8\overline{\mathbf{CP}^2}$ are non-spin and simply-connected, we only have to show that the results of the surgeries along embedded $S^1$ in those $4$-manifolds are diffeomorphic.

    By Theorem \ref{thm:cork}, we know that there exist diagrams of $R_8$ and $\mathbf{CP}^2 \# 8\overline{\mathbf{CP}^2}$ such that each of diagrams contains a copy of the diagram of $W_2$ (in Figure \ref{fig:W2}), and becomes identical after changing a dotted circle into a $0$-framed unknot. In general, changing a dotted circle into a $0$-framed unknot corresponds to a surgery along an embedded $S^1$ \cite[Section 5.4]{1999Gompf-Stipsicz}. Therefore the results of the surgeries along embedded $S^1$ in $R_8$ and $\mathbf{CP}^2 \# 8\overline{\mathbf{CP}^2}$ are diffeomorphic.
  \end{proof}

\end{document}